\author{Noam Kimmel}
\title{mass equidistribution for {P}oincar\'e series of large index}
\address{N. Kimmel: Raymond and Beverly Sackler School of Mathematical Sciences, Tel Aviv University, Tel Aviv 69978, Israel.}
\email{\href{mailto:noamkimmel@mail.tau.ac.il}{noamkimmel@mail.tau.ac.il}}
\thanks{This research was supported by the European Research Council (ERC) under the European Union's  Horizon 2020 research and innovation program  (Grant agreement No.    786758).}
\keywords{{P}oincar\'e series, Mass equdistribution, holomorphic QUE, zeros of modular forms}
\subjclass{11F11,11F30, 58J51, 81Q50}
\begin{document}

\maketitle

\begin{abstract}
Let $P_{k,m}$ denote the Poincar\'e series of weight $k$ and index $m$ for the full modular group $\mathrm{SL}_2(\mathbb{Z})$, and let $\{P_{k,m}\}$ be a sequence of Poincar\'e series for which $m(k)$ satisfies $m(k) / k \rightarrow\infty$ and $m(k) \ll k^{\frac{3}{2} - \epsilon}$.
We prove that the $L^2$ mass of such a sequence equidistributes on $\mathrm{SL}_2(\mathbb{Z}) \backslash \mathbb{H}$ with respect to the hyperbolic measure as $k$ goes to infinity.
As a consequence, we deduce that the zeros of such a sequence $\{P_{k,m}\}$ become uniformly distributed in $\mathrm{SL}_2(\mathbb{Z}) \backslash \mathbb{H}$ with respect to the hyperbolic measure.
\end{abstract}

\setcounter{tocdepth}{2}
{\tableofcontents}

\section{Introduction}

\subsection{Mass equidistribution}

Let $(M,g)$ be a Riemannian manifold with an associated Laplace-Beltrami operator $\Delta_M$.
A central problem in Quantum chaos is to understand the limiting behaviour of eigenfunctions $\psi_\lambda\in L^2(M,\mathrm{dvol}_g)$ of $-\Delta_M$ as the associated eigenvalue $\lambda$ grows.

An important instance of this problem which attracted a lot of attention is the case of Maass cusp forms on the hyperbolic surface $X = \mathrm{SL}_2(\ZZ) \backslash \HH$.
Let $\phi$ denote such a Maass form with corresponding eigenvalue $\lambda$, and assume $\phi$ is normalized so that $\int_X |\phi|^2 \mathrm{d}\mu = 1$ where $\mathrm{d}\mu$ is the hyperbolic measure $\mathrm{d}\mu = \frac{\mathrm{d}x\mathrm{d}y}{y^2}$.
In this setting, Zelditch \cite{MR1105653} has shown that as $\lambda \rightarrow\infty$, for a typical Maass form $\phi$, the measure $\mathrm{d}\mu_\phi = |\phi(x+iy)|^2 \frac{\mathrm{d}x\mathrm{d}y}{y^2}$ converges weakly to the uniform distribution measure $\frac{3}{\pi}\mathrm{d}\mu$ on $X$.
This statement is referred to as Quantum Ergodicity.
For negative curvature surfaces such as $X$, it was conjectured by Rudnick and Sarnak \cite{MR1266075} that a stronger result holds - that for every Maass cusp form $\phi$ the distribution $\mathrm{d}\mu_\phi$ converges weakly to the uniform distribution measure, without the necessity of excluding a negligible amount of Maass cusp forms.
This statement is referred to as Quantum Unique Ergodicity (QUE).
For the surface $X$, QUE for Hecke Maass forms was proven in the works of Lindenstrauss \cite{MR2195133} and Soundararajan \cite{MR2680500}. 
For further material related to QUE we refer the reader to \cite{MR1361757, MR2757360, MR2985318}. 

A well studied variant of QUE on $X$ is to consider holomorphic modular forms in place on Maass cusp forms.
In this variant, the role of the Laplacian is replaced with the Cauchy-Riemann equations.
And so, instead of considering a sequence of Maass cusp forms $\phi_\lambda$ associated to increasing eigenvalues $\lambda$, one considers a sequence of holomorphic cusp forms $f_k$ of increasing weight $k$, normalized so that $\int_{X}y^k |f_k(x+iy)|^2 \mathrm{d}\mu = 1$.
One then wishes to show that the corresponding measure $\mathrm{d}\mu_f = y^k |f_k(x+iy)|^2 \frac{\mathrm{d}x \mathrm{d}y}{y^2}$ converges weakly to the uniform measure on $X$.
A sequence of holomorphic modular forms $\{f_k\}$ which, after normalizing each element in the sequence, satisfies this condition, is also referred to as having mass equidistribution.

However, mass equidistribution cannot hold for all sequence $\{f_k\}$ of holomorphic modular forms.
Indeed, one can find sequences of holomorphic cusp forms of increasing weight that, after normalization, do not approach the uniform measure (such as $(\Delta^n)_{n\geq 1}$, where $\Delta(z)$ is the Ramanujan delta function).
The failure stems from the fact that $\SS_k$, the space of holomorphic cusp forms of weight $k$ for $\mathrm{SL}_2(\ZZ)$, has a dimension of roughly $\frac{k}{12}$ (as a linear space over $\CC$), as opposed to the case of Maass cusp forms where each eigenspace of $-\Delta_X$ is expected to be very small.
And so, when considering the holomorphic analogue of QUE, one has to find a formulation that deals with this dimension issue.
This is usually done by restricting attention to special holomorphic modular forms within each $\SS_k$.

The most natural restriction is to consider Hecke cusp forms, those elements in $\SS_k$ which are eigenforms of all Hecke operators.
For such forms, it was shown by Holowinsky and Soundararajan \cite{MR2680499} that holomorphic QUE holds.
This result has also been generalized in many works (see for example \cite{MR2852367, MR2886501,MR2813338, MR4057145, MR3807308}).

However, it is interesting to consider whether there are other families of cusp forms besides Hecke eigenforms which also satisfy holomorphic QUE.
In \cite{huang2022effective}, Huang showed that small linear combinations of Hecke eigenforms also satisfy holomorphic QUE, which provides an example of such a family.

In this paper we present another family of special cusp forms which we show also satisfy holomorphic QUE.
Namely, we are going to consider Poincar\'e series of large index.
The precise statement of mass equidistribution we prove is given in \autoref{thm-mass_equi}.

\subsection{Zeros of Poincar\'e series}
The motivation for this project came from exploring the zeros of Poincar\'e series.
There is a remarkable connection between mass equidistribution for a sequence $\{f_k\}$ and the asymptotic distribution of its zeros.
This connection was found by Rudnick in \cite{MR2181743} where he showed that mass equidistribution for a sequence of cusp forms $\{f_k\}$ also implies that their zeros become uniformly distributed in $\SL\backslash\HH$ with respect to the hyperbolic measure.

Regarding the zeros of Poincar\'e series, Rankin showed in \cite{MR0664646} that at least $\frac{k}{12} - m$ of the $\frac{k}{12} + \BigO{1}$ zeros of $P_{k,m}$ in the standard fundamental domain $\mathcal{F}$ lie on the arc $|z| = 1$ (where $P_{k,m}$ is the Poincar\'e series of weight $k$ and index $m$ for the full modular group).
This shows that for any fixed $m\geq 1$, the mass of the sequence $\{P_{k,m}\}$ does not equidistribute, since the zeros certainly do not become uniformly distributed in $\SL\backslash\HH$ as $k$ grows.
However, this result does not exclude sequences where $m$ grows with $k$.

In a recent paper \cite{kimmel2024asymptotic}, the author showed that for a sequence of Poincar\'e series $\{P_{k,m}\}$ with $m(k) \sim \alpha k$ for some constant $\alpha$, many of the zeros of $\{P_{k,m}\}$ tend (as $k$ grows) towards specific curves which depend on $\alpha$.
And so, once again, one can deduce that the mass of such a sequence cannot equidistribute.
However, the curves above become increasingly more intricate and dense in $\mathcal{F}$ as $\alpha$ increases.
This might suggest that the zeros of a sequence $\{P_{k,m}\}$ become uniformly distributed in $\mathcal{F}$ if $m(k)/k\rightarrow \infty$.
Due to Rudnick's result that mass equdistribution implies uniform distribution of the zeros, this naturally raises the question if an even stronger condition holds:
\begin{question}
Does the sequence $\{P_{k,m}\}$ have mass equidistribution when $m(k)$ is such that ${m(k)}/{k}\rightarrow \infty$?
\end{question}

\subsection{Main results}
Our main result is an affirmative answer, under certain conditions, to the question above.

We remind the reader of the definition of the Poincar\'e series of weight $k$ and index $m$ for the full modular group.
This is given by
\begin{equation}\label{eq-pkm_def}
P_{k,m}(z)=
\sum_{\gamma \in \Gamma_\infty \backslash \mathrm{SL}_2(\ZZ)}
e(m\gamma z) j(\gamma, z)^{-k}
= \frac{1}{2} \sum_{\substack{c,d\in\ZZ \\ \gcd(c,d) = 1}}
\frac{\exp{2\pi i m \gamma_{c,d}  z}}{(cz + d)^k} 
\end{equation}
where 
$$
\Gamma_\infty = \left\lbrace \pm\begin{pmatrix}
    1       & *  \\
    0       & 1 
\end{pmatrix} \in \mathrm{SL}_2(\ZZ) \right\rbrace, \quad
j\left(
\begin{pmatrix}
    a       & b  \\
    c       & d 
\end{pmatrix}
,z \right) = cz+d,
$$
$e(z) = e^{2\pi i z}$, and $\gamma_{c,d}$ is any matrix in $\mathrm{SL}_2(\ZZ)$ with bottom row $(c,d)$.

With these notation we have:
\begin{theorem}\label{thm-mass_equi}
Let $\epsilon > 0$.
For any $m(k)$ satisfying
\begin{equation*}
k
\lll m \ll
k^{\frac{3}{2} - \epsilon}
\end{equation*}
the sequence $\left\lbrace P_{k,m}\right\rbrace$ has mass equidistribution.
That is, for every smooth and compactly supported function $\psi\in C_c^\infty\left(\SL\backslash\HH\right)$ we have
\begin{equation}
\int_{\SL\backslash\HH}\left|\widetilde{P}_{k,m}(z)\right|^2 \psi(z) y^k \mathrm d \mu
\\
\xrightarrow{k\rightarrow\infty}
\frac{3}{\pi}\int_{\SL\backslash\HH}\psi(z) \mathrm d \mu.
\end{equation}
where 
$$
\widetilde{P}_{k,m} = \frac{P_{k,m}}{\BRAK{P_{k,m}, P_{k,m}}^{\frac{1}{2}}}.
$$
\end{theorem}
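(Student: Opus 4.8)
The plan is to combine the spectral decomposition of $L^2(\SL\backslash\HH)$ with an unfolding of the Poincar\'e series. Since $\int_{\SL\backslash\HH} y^k|\widetilde{P}_{k,m}|^2\,\mathrm{d}\mu = 1$ by the normalization, the family of measures $\mathrm{d}\nu_{k,m} = y^k|\widetilde{P}_{k,m}|^2\,\mathrm{d}\mu$ has constant total mass, so it suffices to establish weak convergence by testing $\mathrm{d}\nu_{k,m}$ against a fixed Hecke--Maass cusp form $\phi_j$ and against (incomplete) Eisenstein series, the constant component of the spectral expansion of any $\psi\in C_c^\infty$ producing exactly the desired main term: dividing by $\BRAK{P_{k,m},P_{k,m}}$ and using $\mathrm{vol}(\SL\backslash\HH) = \pi/3$ turns it into $\frac{3}{\pi}\int_{\SL\backslash\HH}\psi\,\mathrm{d}\mu$. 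Concretely, I would reduce \autoref{thm-mass_equi} to the bound
\[
\int_{\SL\backslash\HH} y^k \left|P_{k,m}\right|^2 \phi_j \, \mathrm{d}\mu = o\!\left(\BRAK{P_{k,m},P_{k,m}}\right)
\]
as $k\to\infty$, with a bound polynomial in the spectral parameter $t_j$ (and uniform in $k$) so that, together with the rapid decay of $\BRAK{\psi,\phi_j}$ coming from the smoothness of $\psi$, one may sum over the discrete spectrum and integrate over the continuous spectrum.

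For the denominator I would evaluate $\BRAK{P_{k,m},P_{k,m}}$ by Petersson's formula,
\[
\BRAK{P_{k,m},P_{k,m}} = \frac{\Gamma(k-1)}{(4\pi m)^{k-1}}\left(1 + 2\pi i^{-k}\sum_{c\geq 1}\frac{S(m,m;c)}{c} J_{k-1}\!\left(\frac{4\pi m}{c}\right)\right),
\]
isolating the diagonal main term $\Gamma(k-1)/(4\pi m)^{k-1}$ and showing, via Weil's bound and the transition-range behaviour of $J_{k-1}$, that the Kloosterman--Bessel correction is lower order in the range $k\lll m\ll k^{3/2-\epsilon}$; this yields the lower bound on the norm needed for the denominator. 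For each numerator I would unfold one copy of $P_{k,m}$: since $y^k|P_{k,m}|^2\phi_j$ is $\Gamma$-invariant, the $\Gamma_\infty\backslash\SL$ sum defining $P_{k,m}$ collapses the integral to the strip $\Gamma_\infty\backslash\HH$, and the resulting $x$-integral forces a cancellation of Fourier frequencies. Writing the Fourier coefficients of $P_{k,m}$ as $p_m(n)$ and those of $\phi_j$ as $\rho_j(r)$, this rewrites the triple product as a shifted convolution sum
\[
\int_{\SL\backslash\HH} y^k \left|P_{k,m}\right|^2 \phi_j \, \mathrm{d}\mu = \sum_{r\neq 0} \overline{p_m(m+r)}\, \rho_j(r)\, \mathcal{W}_k(r,m,t_j),
\]
where $\mathcal{W}_k(r,m,t_j) = \int_0^\infty e^{-2\pi(2m+r)v} v^{k-\frac{3}{2}} K_{it_j}(2\pi|r|v)\,\mathrm{d}v$; the weight $\mathcal{W}_k$ peaks at height $v\approx k/(4\pi m)$ and localizes the sum to the short range $|r|\ll (1+t_j)\, m/k$. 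The Eisenstein contribution is handled identically, with $\rho_j(r)$ replaced by the divisor-type coefficients of $E(\cdot,1/2+it)$.

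The main obstacle is to bound this shifted convolution sum by $o(\BRAK{P_{k,m},P_{k,m}})$ uniformly enough in $t_j$. Here I would substitute the Kloosterman expansion of the off-diagonal coefficients $p_m(m+r)$, estimate $S(m,m+r;c)$ by Weil's bound, use the uniform asymptotics of $J_{k-1}$ across its transition range $4\pi\sqrt{m(m+r)}/c\approx k$, and bound the averaged Maass coefficients $\rho_j(r)$ over the short $r$-range by Rankin--Selberg and spectral large-sieve mean-value estimates. Balancing the resulting off-diagonal contribution against the diagonal main term $\Gamma(k-1)/(4\pi m)^{k-1}$ is exactly what forces the upper constraint $m\ll k^{3/2-\epsilon}$, since the sum has roughly $m/k$ essential terms whose combined size overtakes the main term beyond this threshold; the lower constraint $k\lll m$ ensures the localization height $k/(4\pi m)$ tends to $0$, the regime in which the mass spreads out rather than concentrating on the curves found for $m\asymp k$. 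Finally, assembling the individual bounds against $\BRAK{\psi,\phi_j}$ and the Eisenstein integral, and invoking the Weyl law to control the polynomial growth in $t_j$ against the super-polynomial decay of the spectral coefficients of $\psi$, completes the proof.
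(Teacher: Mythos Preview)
Your outline matches the paper's proof almost exactly: spectral decomposition, unfolding one copy of $P_{k,m}$, rewriting the unfolded integral as a shifted convolution of the Fourier coefficients $p_{k,m}(n)$ against Hecke eigenvalues weighted by $\int_0^\infty e^{-\cdots} y^{k-3/2} K_{it}(\cdots)\,\mathrm{d}y$, and then bounding $p_{k,m}(n)$ via the Kloosterman--Bessel expansion with Weil and the uniform $J_{k-1}$ asymptotics. The paper uses only pointwise Rankin--Selberg second-moment bounds (no large sieve is needed) and, for the spectral sum, does not actually track the $t_j$-dependence through the cusp-form estimate; instead it splits at a height $T$, handles the finitely many $t_j\le T$ term by term, and disposes of the tail $t_j>T$ by the trivial bound $|\langle |\widetilde P_{k,m}|^2,\phi_j\rangle|\le\|\phi_j\|_\infty\ll t_j^{1/2}$ against the rapid decay of $\langle\psi,\phi_j\rangle$.

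There is one genuine omission. You say the Eisenstein contribution is handled ``identically'' by replacing $\rho_j(r)$ with the divisor-type coefficients, but $E(z,\tfrac12+it)$ has a nonzero constant term $\widehat\phi[0](y)\asymp y^{1/2}$, and this produces an additional piece
\[
\int_0^\infty \widehat\phi[0](y)\,\widehat{P_{k,m}}[m](y)\,e^{-2\pi m y}y^{k-2}\,\mathrm{d}y
\;\asymp\;
\frac{\Gamma(k-\tfrac12)}{(4\pi m)^{k-1/2}}
\;\asymp\;
\sqrt{\frac{k}{m}}\cdot\BRAK{P_{k,m},P_{k,m}}.
\]
This term is precisely where the hypothesis $k\lll m$ enters the proof --- not through the localization heuristic you cite, and not through the cusp-form estimate, which in fact goes through for the full range $1\le m\ll k^{3/2-\epsilon}$. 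Without isolating and bounding this constant-term contribution, your argument does not account for the lower constraint at all.
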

Here we used the notation $k\lll m$ to indicate $m(k)/k \rightarrow \infty$.

\begin{remarks}
\text{}
\begin{enumerate}
    \item Later, in \eqref{eq-pkm_normalization}, we show that for $m(k)$ as above the normalization factor satisfies 
    $$
    \BRAK{P_{k,m},P_{k,m}} \sim \frac{\Gamma\left(k-1\right)}{\left(4\pi m \right)^{k-1}}.
    $$
    \item The condition $m \ggg k$ is necessary, since otherwise the zeros of $\{P_{k,m}\}$ do not equidistribute \cite{kimmel2024asymptotic}.
    \item Our methods of proof work in the range $k\lll m\ll k^{3/2 -\epsilon}$, but it is an interesting question if mass equidistribution continues for $m\gg k^{3/2}$.
    We note however that tackling the range $m\ggg k^2$ seems out of reach of current methods, since it is not even known whether or not $P_{k,m}$  identically vanishes in this range.
\end{enumerate}
\end{remarks}

As we remarked earlier, \autoref{thm-mass_equi} also implies uniform distribution of zeros.
This will be discussed further in \autoref{subsec-zeros}, in which we show:
\begin{corollary}\label{cor-zeros}
Let $\epsilon > 0$.
For any $m(k)$ satisfying
\begin{equation*}
k
\lll m \ll
k^{\frac{3}{2} - \epsilon}
\end{equation*}
the zeros of the sequence $\left\lbrace P_{k,m}\right\rbrace$ equidistribute in $\mathrm{SL}_2(\mathbb{Z}) \backslash \mathbb{H}$ (in the sense given in \autoref{thm-equidist_zeros}).
\end{corollary}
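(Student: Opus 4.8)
The plan is to obtain \autoref{cor-zeros} as a direct consequence of \autoref{thm-mass_equi} together with the general principle of Rudnick \cite{MR2181743}, which I would record in \autoref{subsec-zeros} as \autoref{thm-equidist_zeros}: any sequence of holomorphic cusp forms of weight $k\to\infty$ whose normalized mass measures $y^{k}|\widetilde{P}_{k,m}|^{2}\,\mathrm{d}\mu$ converge weakly to $\tfrac{3}{\pi}\,\mathrm{d}\mu$ has equidistributing zeros. Since each $P_{k,m}$ is a holomorphic cusp form of weight $k$ and \autoref{thm-mass_equi} supplies exactly this weak convergence, the proof reduces to checking that the hypotheses of \autoref{thm-equidist_zeros} are met in our setting; no input specific to Poincar\'e series is needed beyond \autoref{thm-mass_equi} itself.

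To make clear why the weak convergence suffices, I would recall the underlying mechanism. Labelling the zeros of $P_{k,m}$ in a fundamental domain as $z_1,\dots,z_N$ with $N=\tfrac{k}{12}+\BigO{1}$, the Poincar\'e--Lelong formula gives $\tfrac{1}{2\pi}\Delta_E\log|P_{k,m}|=\sum_{j}\delta_{z_j}$ distributionally, where $\Delta_E$ is the Euclidean Laplacian. Writing $\log|P_{k,m}|=-\tfrac{k}{2}\log y+\log(y^{k/2}|P_{k,m}|)$, using $\Delta_E\log y=-y^{-2}$ and $\Delta_E=y^{-2}\Delta$ for the hyperbolic Laplacian $\Delta$, and integrating by parts against a test function $\psi$, one obtains
\begin{equation*}
\sum_{j=1}^{N}\psi(z_j)=\frac{k}{4\pi}\int_{\SL\backslash\HH}\psi\,\mathrm{d}\mu+\frac{1}{4\pi}\int_{\SL\backslash\HH}\log\!\big(y^{k}|\widetilde{P}_{k,m}|^{2}\big)\,(\Delta\psi)\,\mathrm{d}\mu,
\end{equation*}
the constant $\log\BRAK{P_{k,m},P_{k,m}}$ having dropped out since $\int_{\SL\backslash\HH}\Delta\psi\,\mathrm{d}\mu=0$. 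Dividing by $N\sim k/12$ turns the first term into the desired limit $\tfrac{3}{\pi}\int\psi\,\mathrm{d}\mu$, so everything reduces to showing the second term is $o(k)$; equivalently, that $h_k:=y^{k}|\widetilde{P}_{k,m}|^{2}$ satisfies $\tfrac{1}{k}\log h_k\to 0$ in $L^{1}(\SL\backslash\HH,\mu)$, since $\Delta\psi$ is bounded.

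This $L^{1}$ convergence I would establish from two sides, and the lower bound is where the \emph{main obstacle} lies. The upper direction is unconditional: as $\int h_k\,\mathrm{d}\mu=1$, the inequality $\log t\le t-1$ yields $(\log h_k)_+\le h_k$ pointwise, hence $\int(\log h_k)_+\,\mathrm{d}\mu\le 1$, and Jensen's inequality gives $\int\log h_k\,\mathrm{d}\mu\le\mathrm{vol}(X)\log(\mathrm{vol}(X)^{-1})=\BigO{1}$. The delicate direction is the matching lower bound $\int\log h_k\,\mathrm{d}\mu\ge -o(k)$, equivalently $\int(\log h_k)_-\,\mathrm{d}\mu=o(k)$. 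This is exactly where mass equidistribution is indispensable: $\log h_k\to-\infty$ at each of the $\sim k/12$ zeros and at the cusp, and without control on how the mass of $h_k$ is spread these singular contributions can accumulate to order $k$ — indeed for fixed index $m$ the zeros concentrate on the arc $|z|=1$ (Rankin \cite{MR0664646}), and the lower bound genuinely fails there. The main difficulty of the whole argument thus resides in this lower bound, which is precisely the content of Rudnick's theorem; once \autoref{thm-mass_equi} guarantees that the mass of $P_{k,m}$ does not concentrate, the bound holds, the $L^{1}$ convergence follows, the second term above is $o(k)$, and \autoref{cor-zeros} is proved.
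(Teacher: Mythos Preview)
Your proposal is correct and follows essentially the same approach as the paper: the proof of \autoref{cor-zeros} is simply to invoke \autoref{thm-mass_equi} and feed the resulting mass equidistribution into Rudnick's \autoref{thm-equidist_zeros}. The paper's own proof is in fact the two-line version of your first paragraph, without the recapitulation of the Poincar\'e--Lelong mechanism; your additional sketch of Rudnick's argument is accurate but not needed, and the paper handles the hypothesis $v_\infty(f_k)=o(k)$ via a separate remark rather than inside the proof.
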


\subsection{Proof plan and paper structure}
Previous results regarding holomorphic QUE were always related to Hecke eigenforms, for which it is a well known theme that mass equidistribution is equivalent to subconvexity estimates of related $L$ functions.
However, in this paper we consider mass equidistribution of Poincar\'e series, which unlike previous examples are not related to Hecke eigenforms in any obvious exploitable way.
And so it makes sense that different methods must be used.

In order to prove mass equidistribution we begin as usual by considering the inner product of $|P_{k,m}|^2$ against functions in the spectral decomposition of $\Delta$ on $L^2\left(\SL \backslash \HH\right)$.
We then apply the Rankin-Selberg unfolding trick, which lends itself nicely due to the definition of $P_{k,m}$ as an average over $\Gamma_\infty \backslash\SL$.
The problem then reduces to various estimates of Fourier coefficients of $P_{k,m}$, which we require with uniformity in the weight $k$.

And so, \autoref{sec-pkm_coef} is dedicated to the study of the Fourier coefficients of Poincar\'e series.
We give upper bounds for the $n$-th Fourier coefficient of $P_{k,m}$, uniform in $m,n,k$, and we also give asymptotics for the $m$-th Fourier coefficient of $P_{k,m}$ under some restrictions.

Using these results, in \autoref{sec-mass}, we prove \autoref{thm-mass_equi} regarding the mass equidistribution of Poincar\'e series.
Afterwards, in \autoref{subsec-zeros}, we discuss the distribution of zeros and prove \autoref{cor-zeros}.

\subsection{Notations for modular forms}
We now present some notations and basic facts about modular forms. 
Throughout the paper, $k\in \NN$ will always be an even positive integer.
We denote by $\MM_k$ the space of modular forms of weight $k$ for the full modular group $\mathrm{SL}_2(\ZZ)$.
Forms in $f\in \MM_k$ have an expansion
\begin{equation}\label{eq-modular_form_fourier_expansion}
    f(z) = \sum_{n\geq 0}a_f(n)   q^n
\end{equation}
with $a_f(n)\in \CC$, $q = e^{2\pi i z}$, $z\in \HH = \SET{z\;:\; \Im{z} > 0}$.
This expansion is referred to as the Fourier expansion of $f$ at the cusp.
One defines $v_\infty(f)$ as the smallest $n$ such that $a_f(n) \neq 0$.
Forms with $v_\infty(f) > 0$ form a subspace of $\MM_k$ called the space of cusp forms, and denoted $\SS_k$.
The space $\MM_k$ is the direct sum of $\SS_k$ and the one dimensional space spanned by the Eisenstein series of weight $k$:
$$
E_k(z) = 
\sum_{\gamma \in \Gamma_\infty \backslash \mathrm{SL}_2(\ZZ)}j(\gamma, z)^{-k}
=\frac{1}{2}\sum_{\substack{c,d\in \ZZ \\ \gcd(c,d) = 1}}\frac{1}{(cz + d)^k}.
$$

The Poincar\'e series $P_{k,m}$ is a modular form of weight $k$.
When $m=0$, we have that $P_{k,0} = E_k$ is the Eisenstein series of weight $k$. 
For $m \geq 1$, it is known that $P_{k,m}$ is a cusp form.

We define the Petersson inner product by $\BRAK{\cdot ,\cdot } : \MM_k\times \SS_k \rightarrow \CC$:
$$
\BRAK{f ,g }
= \int_{\mathrm{SL}_2(\ZZ) \backslash \HH}f(z)\overline{g(z)} \left(\Im z\right)^k \mathrm{d}\mu = \int_{\mathcal{F}}f(z) \overline{g(z)} \left(\Im z\right)^k \mathrm{d}\mu
$$
where as before $\mathrm{d}\mu$ is the hyperbolic measure $\mathrm{d}\mu = \frac{\mathrm{d}x \mathrm{d}y}{y^2}$, and $\mathcal{F}$ is the standard fundamental domain
\begin{multline*} 
\mathcal{F} = 
\SET{z\in \HH \; : \; |z| > 1, \Re{z} \in \left(-\frac{1}{2},\frac{1}{2}\right]} \\
\bigcup 
\SET{z\in \HH \; : \; |z| = 1, \arg{z} \in \left[\frac{\pi }{3}, \frac{\pi}{2}\right]}.
\end{multline*}
At times, we will also write $\BRAK{f,g}$ for more general functions, which we still define as $\int_{\mathcal{F}}f(z) \overline{g(z)} \left(\Im z\right)^k \mathrm{d}\mu$ .

An important property of the Poincar\'e series is their relationship with the Petersson inner product.
For a cusp form $f\in \SS_k$ with expansion $\sum_{n\geq 1}a_f(n) q^n$, and $m \geq 1$, we have
\begin{equation}\label{eq-inner_product_pkm}
    \BRAK{f ,P_{k,m} } = \frac{\Gamma(k-1)}{(4\pi m)^{k-1}}a_f(m).
\end{equation}

\subsection{Further notations}
We will use the strict $O$ notations, so that $f(x) = \BigO{g(x)}$ if one has $|f(x)| \leq C |g(x)|$ for some constant $C>0$ and for all valid inputs $x$.
We will also denote $f(x) \ll g(x)$ to indicate $f(x) = \BigO{g(x)}$ and $f(x)\asymp g(x)$ if both $f(x) = \BigO{g(x)}$ and $g(x) = \BigO{f(x)}$.
If the implied constants depend on some parameters $p_1,p_2,...$, this will either be explicitly stated, or indicated by subscripts such as $O_{p_1,_2,...}$ or $\ll_{p_1,p_2,...}$.

We write $f(x) = o(g(x))$ to indicate that for any $\epsilon>0$ there exists $x_0$ such that for all $x \geq x_0$ we have $|f(x)| \leq \epsilon |g(x)|$.
We will also denote $f(x) \lll g(x)$ to indicate $f(x) = o(g(x))$.
Lastly, we write $f(x) \sim g(x)$ to indicate $\lim_{x\rightarrow\infty} \frac{f(x)}{g(x)} = 1$.
Unless otherwise stated, the notations $o,\lll,\sim$ will refer to the variable $k$ (i.e. $k\rightarrow\infty$).

We will at times use $(a_1 ,...,a_n)$ to denote the gcd of $a_1,...,a_n$.
We will also use the notation $\sigma_s(n) = \sum_{d\mid n}d^s$ for the divisor functions.

Throughout, the variable $z$ will refer to a complex number of the form $x+iy$ with $x,y\in\RR$.

\section{Bounds for the Fourier coefficients of Poincar\'e series}\label{sec-pkm_coef}

\subsection{Main results of the section}

In this section we consider the Fourier coefficients of $P_{k,m}$ for $m\geq 1$ which we denote by 
$$
P_{k,m}(z) = \sum_{n\geq 1}\pkm{n}  q^n.
$$
The coefficients $\pkm{n} $ can be explicitly calculated, giving:
\begin{equation}\label{eq-pkm_fourier_coef}
\pkm{n} 
=   
\delta_{m,n} + 2\pi i^k \left(\frac{n}{m}\right)^{\frac{k-1}{2}}
\sum_{c > 0 }\frac{K(m,n,c)}{c}J_{k-1}\left(\frac{4\pi \sqrt{mn}}{c}\right)
\end{equation}
where $\delta_{m,n}$ is the Kronecker delta function, $J$ is the Bessel function of the first kind, and $K(m,n,c)$ is the Kloosterman sum:
\begin{equation}
K(m,n,c) = \sum_{\substack{1 \leq x \leq c \\ \gcd(x,c) = 1}}
e\left(\frac{mx + n \overline{x}}{c}\right)
\end{equation}
where $\overline{x}$ is the inverse of $x$ in $\left(\bigslant{\ZZ}{c\ZZ}\right)^*$.

The goal of this section is to provide bounds on $\left|\pkm{n}\right|$.
We denote
\begin{equation}\label{eq-smn}
S_{m,n} = \sum_{c > 0 }\frac{K(m,n,c)}{c}J_{k-1}\left(\frac{4\pi \sqrt{mn}}{c}\right).
\end{equation}
In \cite{MR0597120} Rankin computes an upper bound for $|S_{m,m}|$ which he then uses in order to prove a positive lower bound for
$$
\pkm{m} = 1 + 2\pi i^{k} S_{m,m}.
$$
We will slightly modify and improve his calculations in order to get upper bounds on $|S_{m,n}|$ for $n\neq m$, which in turn will give upper bounds on $\left|\pkm{n}\right|$.
Specifically, we will prove
\begin{proposition}\label{prop-pkmn_bound}
For $k\geq 16$, $m\geq 1$, $n\neq m$, and for any $\epsilon > 0$, we have:
\begin{equation*}
\pkm{n}
\ll_\epsilon
\left(\frac{n}{m} \right)^{\frac{k-1}{2}}
\left(\frac{(nm)^{\frac{1}{4} + \epsilon}}{k}
+ 
 (m,n)^{\frac{1}{2} }
\frac{k^{1/6}}{(nm)^{\frac{1}{4} - \epsilon}}\right).
\end{equation*}
\end{proposition}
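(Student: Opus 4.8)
The plan is to adapt Rankin's estimate for $|S_{m,m}|$ so as to track the dependence on $n$ and on the greatest common divisor $(m,n)$. Since $n \neq m$, the Kronecker term in \eqref{eq-pkm_fourier_coef} vanishes, so $\pkm{n} = 2\pi i^{k} (n/m)^{(k-1)/2} S_{m,n}$ and it suffices to bound $|S_{m,n}|$ by the bracketed expression. The two inputs are Weil's bound $|K(m,n,c)| \ll_\epsilon (m,n,c)^{1/2} c^{1/2+\epsilon}$ and uniform bounds for $J_{k-1}(x)$: rapid decay once $x$ is well below $k$, the turning-point bound $J_{k-1}(x) \ll k^{-1/3}$ for $|x - k| \ll k^{1/3}$, and the oscillatory bound $J_{k-1}(x) \ll (x^2 - (k-1)^2)^{-1/4}$ once $x - k \gg k^{1/3}$.

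Writing $X = 4\pi \sqrt{mn}$ and $x = X/c$, the argument $X/c$ of the Bessel function decreases as $c$ grows and crosses the turning point $x \approx k$ at $c_0 \asymp X/k \asymp \sqrt{mn}/k$. I would split the sum defining $S_{m,n}$ according to the size of $x$ relative to $k$: the range with $c$ well beyond $c_0$ (so $x$ well below $k$), where $J_{k-1}$ decays and the contribution is negligible; a transition range $|x - k| \ll k^{1/3}$, equivalently $|c - c_0| \ll X/k^{5/3}$; and the oscillatory range with $c$ below $c_0$ (so $x > k$). In the oscillatory range the powers of $c$ coming from $|K(m,n,c)|/c$ and from $(x^2 - (k-1)^2)^{-1/4} \asymp (X/c)^{-1/2}$ (valid once $x \geq 2k$) cancel up to $c$-independent factors, and the intermediate piece $k < x < 2k$ is handled by a dyadic decomposition in $c_0 - c$.

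The oscillatory range produces the first term. The key point is that, although Weil's bound carries a factor $(m,n,c)^{1/2}$ for each $c$, this does not accumulate on summation:
\begin{equation*}
\sum_{c \leq C} (m,n,c)^{1/2} \ll_\epsilon C\,(m,n)^\epsilon,
\end{equation*}
since the $c$ for which $(m,n,c)$ is large are sparse. Hence no net power of $(m,n)$ survives, and taking $C \asymp c_0 \asymp X/k$ yields a contribution $\ll_\epsilon X^{1/2+\epsilon} k^{-1} \asymp (mn)^{1/4+\epsilon}/k$. The transition range behaves oppositely: it contains only $O(1 + X/k^{5/3})$ integers $c$, and a single such $c$ may be divisible by a large divisor of $(m,n)$, so one must retain the full factor $(m,n)^{1/2}$; multiplying it by the peak value $k^{-1/3}$ of the Bessel function and by the weight $c_0^{-1/2} \asymp (mn)^{-1/4} k^{1/2}$ produces the second term $(m,n)^{1/2} k^{1/6} (mn)^{-1/4+\epsilon}$. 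One then checks that the intermediate oscillatory piece is dominated by these two terms. I expect the transition range to be the main obstacle: it requires Bessel bounds genuinely uniform in both $k$ and $x$ as $x$ passes through $k$, and it requires coupling the slowly varying Bessel decay to the arithmetic sum over $c$ — via the dyadic decomposition above — without reintroducing a spurious factor $(m,n)^{1/2}$ in the oscillatory range. This contrast between the two ranges is exactly where tracking $n \neq m$ refines Rankin's original diagonal computation.
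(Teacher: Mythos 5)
Your proposal is correct and follows essentially the same route as the paper: both split the sum defining $S_{m,n}$ at the Bessel turning point $c\asymp\sqrt{mn}/k$, combine a Weil-type bound for $K(m,n,c)$ with uniform bounds for $J_{k-1}$ in the oscillatory, transition and decay regimes, and isolate the same two sources for the two terms --- the oscillatory bulk, where the gcd factor averages out to $(m,n)^\epsilon$, giving $(mn)^{1/4+\epsilon}/k$, and the turning point, where a genuine $(m,n)^{1/2}$ must be retained, giving $(m,n)^{1/2}k^{1/6}(mn)^{-1/4+\epsilon}$. The only difference is bookkeeping: the paper factors out $d=(m,n,c)$ first and quotes Rankin's integral estimates for the resulting Bessel sums $T_d$ (so its ``decay'' range is carried through rather than discarded, and the divisor dichotomy appears as $\sigma_{-1/2}$ versus $\sigma_{1/2}$), whereas you split by Bessel regime first and redo the turning-point analysis dyadically --- a reconstruction of the same computation.
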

In fact, we are going to prove a slightly stronger bound with the $\epsilon$ part replaced with more precise functions of $m,n,k$ (see \autoref{lem-smn_leq_Q}, \autoref{lem-smn_geq_Q}).
However we state this slightly weaker result since it will simplify later calculations.

We will also require the following result regarding $\pkm{m}$, which follows from Rankin's original computation in \cite{MR0597120}.
\begin{proposition}\label{prop-pkmm}
Let $\epsilon > 0$.
For $1\leq m \ll k^{2 - \epsilon}$ we have
$$
\pkm{m} \sim 1
$$
as $k\rightarrow\infty$.
\end{proposition}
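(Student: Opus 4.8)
The plan is to reduce the statement to a decay estimate for the Kloosterman--Bessel sum $S_{m,m}$. Since $\pkm{m} = 1 + 2\pi i^{k} S_{m,m}$, it suffices to show that $S_{m,m} \lll 1$ as $k\to\infty$ whenever $1\le m \ll k^{2-\epsilon}$; in fact I would aim for the quantitative bound $S_{m,m} \ll_\epsilon m^{1/2+\epsilon}/k$, which is a negative power of $k$ in this range once the divisor exponent is taken small enough. The two inputs are the Weil bound $|K(m,m,c)| \ll_\epsilon (m,c)^{1/2}c^{1/2+\epsilon}$ and uniform-in-$k$ estimates for $J_{k-1}$. The argument of the Bessel function appearing in $S_{m,m}$ is $4\pi m/c$, so its behaviour is controlled by the turning point $4\pi m/c = k-1$, i.e.\ $c_0 \asymp m/k$: for $c \ll c_0$ the argument exceeds the order and $J_{k-1}$ is oscillatory, for $c \gg c_0$ the argument lies below the order and $J_{k-1}$ decays (sub)exponentially, and in a narrow window around $c_0$ one is in the Airy transition regime. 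I would split the sum over $c$ accordingly and estimate each range, throughout using the elementary divisor-sum bound $\sum_{c\le C}(m,c)^{1/2} \ll_\epsilon C m^\epsilon$ to absorb the gcd factor coming from Weil.

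The dominant, and constraint-determining, contribution is the deep oscillatory range, where $4\pi m/c \ge 2k$ (equivalently $c \ll m/k$) and one may use $|J_{k-1}(4\pi m/c)| \ll (4\pi m/c)^{-1/2} \asymp (c/m)^{1/2}$. Combined with the Weil bound, each term is $\ll (m,c)^{1/2} c^{\epsilon} m^{-1/2}$, and summing over $c \le 2\pi m/k$ gives $\ll m^{-1/2}(m/k)^{1+\epsilon}m^\epsilon \ll_\epsilon m^{1/2+\epsilon}/k$. This is exactly where the hypothesis $m \ll k^{2-\epsilon}$ enters: writing $m \ll k^{2-\delta}$ for a fixed $\delta>0$, one must choose the divisor exponent strictly below $\delta$ so that $m^{1/2+\epsilon}/k$ is $\ll k^{-c(\delta)}$ with $c(\delta)>0$.

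The remaining ranges I expect to be genuinely smaller. In the Airy transition window $|4\pi m/c - (k-1)| \ll k^{1/3}$ one only has $|J_{k-1}| \ll k^{-1/3}$, but the saving is that this window has length only $\asymp m/k^{5/3}$ in the variable $c$, so its total contribution works out to $\ll_\epsilon m^{1/2+\epsilon}k^{-3/2}$, and the near-turning oscillatory part (argument between roughly $k$ and $2k$) interpolates between this and the main term and is likewise smaller. For $c$ beyond $c_0$ the argument drops below the order and I would use the power-series bound $J_{k-1}(4\pi m/c) \le (2\pi m/c)^{k-1}/\Gamma(k)$ together with Stirling, producing factors $\bigl(2\pi e m/((k-1)c)\bigr)^{k-1}$ that are super-polynomially small once $c$ exceeds a constant multiple of $m/k$; the short sub-range just below the order is absorbed into the $k^{-1/3}$ transition estimate. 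Summing all ranges yields $|S_{m,m}| \ll_\epsilon m^{1/2+\epsilon}/k$, hence $\pkm{m} = 1 + \BigO{m^{1/2+\epsilon}/k} \to 1$.

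The main obstacle is the uniform-in-$k$ Bessel analysis: one must patch together the exponentially decaying, Airy-transition, and oscillatory regimes and verify in each that the Weil bound times the Bessel bound, summed against $1/c$, is $o(1)$. The delicate point is the transition regime, where $J_{k-1}$ is largest relative to its decay elsewhere; the resolution is the observation that the corresponding window of $c$ is extremely narrow. This is in essence Rankin's computation from \cite{MR0597120}, reorganized to track the dependence on $m$ and $k$ and to read off that $S_{m,m}\to 0$ precisely in the range $m \ll k^{2-\epsilon}$.
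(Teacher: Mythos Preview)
Your plan is essentially Rankin's computation, which is exactly what the paper invokes: write $\pkm{m}=1+2\pi i^{k}S_{m,m}$, split the $c$-sum at the turning point $Q=4\pi m/(k-1)$, apply the Weil bound to $K(m,m,c)$, and feed in uniform-in-order envelopes for $J_{k-1}$ in the oscillatory, transitional and decaying regimes. Carried out correctly this gives $S_{m,m}=o(1)$ for $m\ll k^{2-\epsilon}$ and hence the proposition.

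The one point to flag is that your announced quantitative target $S_{m,m}\ll_\epsilon m^{1/2+\epsilon}/k$ is too strong. The paper (following Rankin) obtains
\[
S_{m,m}\ll_\epsilon \frac{m^{1/2+\epsilon}}{k}+\frac{m^{\epsilon}}{k^{1/3}}+\frac{m}{k^{2}}+k^{-1/3},
\]
and the $k^{-1/3}$-type terms are not removable by this method. The slip is in your transition estimate: the Airy window in $c$ has length $\asymp m/k^{5/3}$, but when this length is below $1$ the window can still contain a single integer $c$, and that one term already contributes $\asymp k^{-1/3}$; you cannot bound the number of summands by the length alone. Relatedly, the power-series bound $(2\pi m/c)^{k-1}/\Gamma(k)$ is only small once $c$ exceeds $\tfrac{e}{2}Q\approx 1.36\,Q$; for $Q\le c\le \tfrac{e}{2}Q$ it is $\ge 1$ and one must instead use the Langer-type envelope (the function $f$ in the paper), a sub-range of length $\asymp Q$ rather than $\asymp Q/k^{2/3}$. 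None of this endangers the proposition, since every extra term is still $o(1)$ when $m\ll k^{2-\epsilon}$, but it explains why the paper's final bound matches yours only in the leading $m^{1/2+\epsilon}/k$ term.
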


\begin{remark}
\autoref{prop-pkmn_bound} can be significantly improved in the range $\sqrt{mn} \ll k$ (see \cite[Corollary 2.3]{MR1828743}).
However, we will not require this in this paper.
\end{remark}

\subsection{Preliminary lemmas and notations}
We begin by introducing some notations from \cite{MR0597120}.
Denote $\nu = k-1$.
Let $x_0$ the positive root of 
$$
x\cdot \exp{\frac{1}{3}(1-x^2)^{\frac{3}{2}}} = \frac{2}{e}
$$
so that $x_0\approx 0.629$.
Denote 
$$
x_\nu = \left(1 - \nu^{-\frac{2}{3}}\right)^{\frac{1}{2}},
\quad
y_\nu = \left(1 + \nu^{-\frac{2}{3}}\right)^{\frac{1}{2}}.
$$
We introduce the following functions
\begin{equation}
f(x) = 
\begin{cases}
    A_1 \nu^{-1/2}\left(\frac{1}{2}ex\right)^\nu & 0\leq x \leq x_0 \\
    A_2 \nu^{-1/2}\left(1- x^2\right)^{-1/4}\exp{-\frac{1}{3}\nu(1-x^2)^{3/2}} & x_0 \leq x \leq x_\nu \\
    \nu^{-1/3} & x_\nu \leq x \leq 1
\end{cases}
\end{equation}
where $A_1,A_2$ are constants chosen to make $f$ continuous (they do not depend on $\nu$).
And -
\begin{equation}
g(x) = 
\begin{cases}
    \nu^{-1/3} & 1 \leq x \leq y_\nu \\
    \nu^{-1/2}(x^2 - 1)^{-1/4} & y_\nu \leq x
\end{cases}.
\end{equation}

These functions are used to give upper bounds for the Bessel function.
\begin{lemma}\label{lem-bessel_J}
For $\nu \geq 15$ we have $J_\nu(\nu x) \ll f(x)$ when $0\leq x\leq 1$ and $J_\nu(\nu x) \ll g(x)$ when $x \geq 1$.
\end{lemma}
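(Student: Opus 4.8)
The plan is to derive all five sub-range bounds from the classical large-order asymptotics of $J_\nu$, organized around the single turning point at $x=1$ where the argument $\nu x$ meets the order $\nu$. I would first record the Debye (saddle-point) expansions, which are valid as long as $x$ stays bounded away from $1$. For $0<x<1$ write $x=\operatorname{sech}\alpha$, so that $\tanh\alpha=\sqrt{1-x^2}$ and
$$
J_\nu(\nu\operatorname{sech}\alpha)=\frac{e^{-\nu(\alpha-\tanh\alpha)}}{\sqrt{2\pi\nu\tanh\alpha}}\bigl(1+O(\nu^{-1})\bigr),
$$
while for $x>1$ write $x=\sec\beta$, so that $\tan\beta=\sqrt{x^2-1}$ and
$$
J_\nu(\nu\sec\beta)=\sqrt{\frac{2}{\pi\nu\tan\beta}}\Bigl(\cos\bigl(\nu(\tan\beta-\beta)-\tfrac{\pi}{4}\bigr)+O(\nu^{-1})\Bigr).
$$

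From these I would read off the prefactors. The decaying amplitude $(2\pi\nu\tanh\alpha)^{-1/2}\asymp\nu^{-1/2}(1-x^2)^{-1/4}$ matches the middle branch of $f$, and the elementary inequality $\alpha-\tanh\alpha=\operatorname{arctanh}(\tanh\alpha)-\tanh\alpha=\sum_{j\ge1}\frac{\tanh^{2j+1}\alpha}{2j+1}\ge\frac13(1-x^2)^{3/2}$ converts the exponential factor into the stated majorant $\exp(-\frac13\nu(1-x^2)^{3/2})$ on $x_0\le x\le x_\nu$. Letting $x\to0$ (so $\alpha\to\infty$, $\operatorname{sech}\alpha\sim2e^{-\alpha}$, $\tanh\alpha\to1$) turns the same expansion into $\nu^{-1/2}(\tfrac12 ex)^\nu$, the first branch of $f$; the split at $x_0$ merely selects whichever of the two elementary majorants is cleaner. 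For $x>1$ the oscillatory amplitude $\sqrt{2/(\pi\nu\tan\beta)}\asymp\nu^{-1/2}(x^2-1)^{-1/4}$ is exactly the second branch of $g$, valid once $x\ge y_\nu$.

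The remaining and genuinely delicate region is the turning-point window $x_\nu\le x\le y_\nu$, i.e. $|x-1|\ll\nu^{-2/3}$, where the Debye expansions degenerate (the amplitude $(1-x^2)^{-1/4}$ blows up) and must be replaced by Olver's uniform Airy-type asymptotic $J_\nu(\nu x)\sim(\tfrac{4\zeta}{1-x^2})^{1/4}\nu^{-1/3}\mathrm{Ai}(\nu^{2/3}\zeta)$, where $\zeta=\zeta(x)$ is the standard change of variable with $\zeta(1)=0$ and $\frac{4\zeta}{1-x^2}\to2^{2/3}$ as $x\to1$. Since $\mathrm{Ai}$ is bounded and the prefactor is bounded on this window, this yields $J_\nu(\nu x)\ll\nu^{-1/3}$, matching the third branch of $f$ and the first branch of $g$; in particular the two constant pieces glue continuously to the neighbouring branches at $x_\nu$ and $y_\nu$, which is precisely how the constants $A_1,A_2$ get fixed.

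The main work is not any single expansion but securing \emph{uniformity}: I need the implied constants in the Debye and Olver expansions to be independent of $\nu$ for every $\nu\ge15$, and the error terms to stay controlled right up to the edges $x_\nu,y_\nu$ of the turning-point window, where the two regimes overlap. The cleanest way to guarantee this is to run Olver's uniform expansion, with its explicit $\nu$-independent error bounds phrased through Airy functions, across the whole half-line $x>0$ at once, and then extract the five elementary majorants in $f$ and $g$ by inserting the standard bounds $\mathrm{Ai}(t)\ll(1+|t|)^{-1/4}$ for $t\le0$ and $\mathrm{Ai}(t)\ll(1+t)^{-1/4}e^{-\frac23 t^{3/2}}$ for $t\ge0$; this simultaneously reproduces every branch and handles the gluing automatically. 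This is essentially Rankin's computation in \cite{MR0597120}, which I would follow, taking care only that the hypothesis $\nu\ge15$ is exactly what makes his tabulated error constants usable.
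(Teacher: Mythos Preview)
Your proposal is correct and, in fact, goes well beyond what the paper does: the paper's own proof consists solely of the citation ``This is Lemma 4.4 from \cite{MR0597120}'' with no further argument. Your sketch of the Debye expansions away from the turning point, the Olver uniform Airy asymptotic across the transition window $|x-1|\ll\nu^{-2/3}$, and the gluing at $x_\nu,y_\nu$ is exactly the content of Rankin's lemma, and you correctly identify that the substantive issue is uniformity of the error constants for $\nu\ge15$; since you also explicitly defer to Rankin's computation at the end, your approach and the paper's are the same.
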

\begin{proof}
This is Lemma 4.4 from \cite{MR0597120}.
\end{proof}

We will also require a bound for Kloosterman sums.
\begin{lemma}\label{lem-k_mnc}
\begin{equation*}
\left|K(m,n,c)\right| \leq 2^{\omega_*(c/d)}c^{1/2}d^{1/2}
\end{equation*}
where $d = \gcd(m,n,c)$ and $\omega_*$ is slightly modified from the usual prime counting function and is defined by $\omega_*(2) = \frac{3}{2}$, $\omega_*(p)=1$ for other primes, and $\omega_*(n) = \sum_{p\mid n}\omega_*(p)$.
\end{lemma}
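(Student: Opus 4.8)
The plan is to reduce the global bound to a local estimate for prime-power moduli and then recombine multiplicatively. First I would invoke the twisted multiplicativity of Kloosterman sums: for $c = c_1 c_2$ with $\gcd(c_1, c_2) = 1$ one has
\[
K(m,n,c_1 c_2) = K(m\overline{c_2}, n\overline{c_2}, c_1)\, K(m\overline{c_1}, n\overline{c_1}, c_2),
\]
where $\overline{c_2}$ and $\overline{c_1}$ are inverses of $c_2$ and $c_1$ modulo $c_1$ and $c_2$ respectively (this follows from the Chinese Remainder decomposition of $1/(c_1c_2)$). Because each twisting factor is a unit modulo the relevant modulus, for every prime $p \mid c_1$ the capped valuation $\min(v_p(\cdot), v_p(c_1))$ of the arguments is unchanged (here $v_p$ is the $p$-adic valuation), so $\gcd(m,n,c_1)$ is preserved prime by prime. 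Writing $c = \prod_p p^{\alpha_p}$ and $d = \gcd(m,n,c) = \prod_p p^{\delta_p}$ with $\delta_p = \min(v_p(m), v_p(n), \alpha_p)$, this reduces the lemma to the per-prime estimate
\[
\left|K(m,n,p^{\alpha})\right| \le 2^{\omega_*(p^{\alpha - \delta})}\, p^{(\alpha + \delta)/2}, \qquad \delta = \min(v_p(m), v_p(n), \alpha),
\]
since the product of these bounds over all $p$ rebuilds $2^{\omega_*(c/d)} c^{1/2} d^{1/2}$, using that $\omega_*(p^j) = \omega_*(p)$ for $j \ge 1$ and $\omega_*(1) = 0$.

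To prove the local estimate I would first strip off the common $p$-part. If $\delta = \alpha$ then $p^\alpha$ divides both $m$ and $n$, so $K(m,n,p^\alpha) = \varphi(p^\alpha) \le p^\alpha = p^{(\alpha+\delta)/2}$, matching the claim with the trivial constant $2^{\omega_*(1)} = 1$. If $\delta < \alpha$, writing $m = p m_1$, $n = p n_1$ and noting that the summand of $K(pm_1, pn_1, p^\alpha)$ equals $e\!\left((m_1 x + n_1 \overline{x})/p^{\alpha-1}\right)$, which depends only on $x$ modulo $p^{\alpha-1}$ while each unit residue there has exactly $p$ unit lifts modulo $p^\alpha$, one gets $K(pm_1, pn_1, p^\alpha) = p\, K(m_1, n_1, p^{\alpha-1})$. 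Iterating $\delta$ times yields $K(m,n,p^\alpha) = p^\delta K(m', n', p^{\alpha-\delta})$ with $\min(v_p(m'), v_p(n')) = 0$, so it remains to establish the nondegenerate bound $|K(m',n',p^\beta)| \le 2^{\omega_*(p)} p^{\beta/2}$ for $\beta = \alpha - \delta \ge 1$ and $\gcd(m',n',p) = 1$; multiplying by $p^\delta$ then recovers the displayed per-prime estimate.

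For $\beta = 1$ the nondegenerate bound is exactly Weil's bound $|K(m',n',p)| \le 2p^{1/2}$, valid whenever $p \nmid \gcd(m',n')$. For $\beta \ge 2$ I would argue by stationary phase: decomposing $x \mapsto x + p^{\beta-1}t$ and summing over $t$ shows that $K(m',n',p^\beta)$ vanishes unless the congruence $m' \equiv n'\overline{x}^2 \pmod p$ is solvable, that is unless $m'n'$ is a nonzero square modulo $p$. When $p \mid m'n'$ (so, after the reduction, exactly one of $m',n'$ is divisible by $p$) there is no stationary point and the sum is $0$; when $p \nmid m'n'$ the sum concentrates on the at most two stationary residues, each contributing a quadratic Gauss sum of modulus $p^{\beta/2}$, giving $|K(m',n',p^\beta)| \le 2 p^{\beta/2}$. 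For odd $p$ this is the asserted $2^{\omega_*(p)} = 2$; the prime $p = 2$ must be treated separately, where up to four square roots and the larger dyadic Gauss sums yield $|K(m',n',2^\beta)| \le 2^{3/2} 2^{\beta/2}$, which is precisely why one sets $\omega_*(2) = \tfrac{3}{2}$.

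The one genuinely deep input is Weil's bound for prime modulus, which rests on the Riemann Hypothesis for curves over finite fields and which I would simply cite. Among the elementary steps, the most delicate is the dyadic analysis at $p = 2$, whose extra factor $\sqrt{2}$ is exactly what the modified counting function $\omega_*$ is designed to absorb; the remaining ingredients (multiplicativity, the degeneracy reduction, and the odd-prime-power evaluation) are routine.
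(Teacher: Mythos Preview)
Your outline is correct and follows the standard route to this refined Weil-type bound: twisted multiplicativity reduces to prime-power moduli, the degeneracy reduction strips the common $p$-part, Weil's theorem handles the prime case, and the Sali\'e-type stationary-phase evaluation handles higher prime powers, with the extra factor $\sqrt{2}$ at $p=2$ absorbed precisely by setting $\omega_*(2)=\tfrac{3}{2}$. The paper, however, does not prove this lemma at all: it simply quotes it as Lemma~3.1 of Rankin~\cite{MR0597120}. What you have written is essentially the argument one finds upon unwinding that reference, so there is no discrepancy of method, only of presentation.

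One small remark on exposition: the substitution $x \mapsto x + p^{\beta-1}t$ you describe is a single step of the reduction from modulus $p^\beta$ to $p^{\beta-1}$, and to reach the claimed $p^{\beta/2}$ one must either iterate it or invoke the full Sali\'e closed form; likewise the case $p=2$ genuinely requires a separate computation (for small $\beta$ by direct inspection, for large $\beta$ by a finer $2$-adic stationary phase) rather than a parenthetical. These are matters of detail, not gaps.
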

\begin{proof}
This is Lemma 3.1 from \cite{MR0597120}.
\end{proof}

\subsection{Bounds on \texorpdfstring{$S_{m,n}$}{Smn}}
We are now ready to give upper bounds for $S_{m,n}$. 
We use the notations from the previous section, and we further denote 
$$
Q = \frac{4\pi \sqrt{mn}}{\nu}.
$$
For a constant $B$, we will also denote by $\varepsilon_B$ the function 
$$
\varepsilon_B(x) = \exp{B\frac{\log (x+3)}{\log \log (2x+3)}}.
$$
We chose the notation $\varepsilon_B(x)$ since (for constant $B$) it grows slower than $x^{\epsilon}$ for all $\epsilon > 0$.

We are interested in giving bounds on $S_{m,n}$ (defined in \eqref{eq-smn}).
We begin by considering those summands in $S_{m,n}$ with $c\leq Q$.
\begin{lemma}\label{lem-smn_leq_Q}
For $k\geq 16$ we have
\begin{multline*}
\sum_{c \leq Q} \frac{K(m,n,c)}{c}
J_{k-1}\left(\frac{4\pi \sqrt{mn}}{c}\right)
\\ \ll
\varepsilon_{B_1}\left(Q\right)
\left(
\sigma_{-1/2}((m,n))\frac{(mn)^{1/4}}{k} + \sigma_{1/2}^Q((m,n))\frac{k^{1/6}}{(mn)^{1/4}}
\right)
\end{multline*}
for some absolute constant $ B_1$.
\end{lemma}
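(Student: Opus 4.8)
The plan is to insert the pointwise bounds of \autoref{lem-bessel_J} and \autoref{lem-k_mnc} into the sum and then estimate the resulting elementary sum over $c$. First I would put the Bessel argument into the normalized form required by \autoref{lem-bessel_J}: writing $\frac{4\pi\sqrt{mn}}{c} = \nu x$ with $x = Q/c$, the condition $c \le Q$ becomes exactly $x \ge 1$, so \autoref{lem-bessel_J} gives $J_{k-1}(4\pi\sqrt{mn}/c) \ll g(Q/c)$. The two branches of $g$ then force a split of the range $c \le Q$ at the point $c = Q/y_\nu$: on $Q/y_\nu \le c \le Q$ one has $1 \le Q/c \le y_\nu$ and $g \equiv \nu^{-1/3}$, while on $c \le Q/y_\nu$ one has $g(Q/c) = \nu^{-1/2}\big((Q/c)^2 - 1\big)^{-1/4}$.

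Next I would feed in the Kloosterman bound. Setting $d = (m,n,c) = ((m,n),c)$, \autoref{lem-k_mnc} gives $|K(m,n,c)| \le 2^{\omega_*(c/d)} c^{1/2} d^{1/2}$, and a short computation shows that against the Bessel factor the power of $c$ collapses: on the second range $\frac{|K(m,n,c)|}{c}g(Q/c) \ll 2^{\omega_*(c/d)} d^{1/2}\nu^{-1/2}(Q^2-c^2)^{-1/4}$, and similarly on the first. To disentangle the divisor $d$ I would use the majorization $((m,n),c)^{1/2} \le \sum_{d\mid (m,n),\, d\mid c} d^{1/2}$ together with the uniform bound $2^{\omega_*(c/d)} \ll \varepsilon_B(Q)$ coming from the maximal order of the divisor function (valid since $c/d \le c \le Q$ and $\varepsilon_B$ is increasing). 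Pulling $\varepsilon_B(Q)$ out and swapping the order of summation reduces everything, for each $d \mid (m,n)$, to the arithmetic-free sum $\sum_{c \le Q/y_\nu,\, d\mid c}(Q^2-c^2)^{-1/4}$ and its first-range analogue.

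The heart of the argument is estimating this last sum. Since $(Q^2-c^2)^{-1/4}$ is increasing on $[0,Q)$, I would bound it by its largest term plus an integral: substituting $c = dc'$ and then $s = Q\sin\theta$ shows $\int_0^{Q/(dy_\nu)}(Q^2-d^2 t^2)^{-1/4}\,dt \ll Q^{1/2}/d$, the bulk contribution, while the largest term, occurring at $c \approx Q/y_\nu$ where $Q^2 - c^2 \asymp Q^2\nu^{-2/3}$, contributes $\ll Q^{-1/2}\nu^{1/6}$. Substituting $Q = 4\pi\sqrt{mn}/\nu$ turns $\nu^{-1/2}d^{1/2}$ times these two pieces into $(mn)^{1/4}\nu^{-1} d^{-1/2}$ and $(mn)^{-1/4}\nu^{1/6} d^{1/2}$ respectively. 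Summing over $d \mid (m,n)$ produces $\sigma_{-1/2}((m,n))$ from the first piece, and, since a divisor contributes to the boundary term only when $d \le Q/y_\nu$, the restricted sum $\sigma_{1/2}^Q((m,n))$ from the second; with $\nu \asymp k$ this is exactly the claimed bound. The first range is handled the same way and is strictly smaller, contributing $(mn)^{1/4}\nu^{-3/2}\sigma_{-1/2}((m,n))$ to the first term and the same $(mn)^{-1/4}\nu^{1/6}\sigma_{1/2}^Q((m,n))$ to the second, so it is absorbed.

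The main obstacle is the transition region $c \approx Q/y_\nu$ (equivalently $x \approx 1$), where the Bessel function passes from oscillation to exponential decay. It is precisely the near-boundary term of the monotone sum that produces the factor $\nu^{1/6} = k^{1/6}$ and, through the constraint $d \le Q/y_\nu$, the truncated divisor sum $\sigma_{1/2}^Q$; extracting these two features with the correct exponents — rather than an inflated power of $\nu$ or an unrestricted $\sigma_{1/2}$ — is the delicate point, and it is why the sum must be compared to an integral away from the boundary while the single largest term is retained separately.
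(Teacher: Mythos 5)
Your proposal is correct and follows essentially the same route as the paper: insert the Kloosterman bound of \autoref{lem-k_mnc}, reorganize over divisors $d\mid(m,n)$ while pulling out $\varepsilon_{B_1}(Q)$, and reduce to the inner Bessel sum, which you correctly estimate as $\ll \frac{Q}{d}\nu^{-1/2}+\nu^{-1/3}$. The only difference is that you re-derive this inner estimate from the explicit bound $g$ by integral comparison (isolating the boundary term at $c\approx Q/y_\nu$ that yields $\nu^{1/6}$), whereas the paper cites the corresponding computation following equation 5.10 of Rankin's paper.
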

Here $\sigma_s$ is the divisor function $\sigma_s(n) = \sum_{d\mid n}d^s$, $(m,n)$ denotes the $\gcd$ of $m$ and $n$, and $\sigma_{s}^Q$ denotes
$$
\sigma_{s}^Q(n) = 
\sum_{\substack{d\mid n \\ d\leq Q}}d^{s}.
$$
\begin{proof}
Using \autoref{lem-k_mnc} we have
\begin{multline}\label{eq-bound1}
\left|
\sum_{c \leq Q} \frac{K(m,n,c)}{c}
J_{k-1}\left(\frac{4\pi \sqrt{mn}}{c}\right)
\right|  
 \\ \leq
\sum_{\substack{d\leq Q \\ d\mid (m,n)}}
\sum_{\substack{r \leq Q/d \\ \left(r, \frac{m}{d}, \frac{n}{d}\right)=1}}
2^{\omega_*(r)}r^{-1/2}\left|J_{\nu}\left(\nu \frac{Q}{rd}\right)\right|
 \\ \ll
Q^{-1/2}
\varepsilon_{B_1}(Q)
\sum_{\substack{d\leq Q \\ d\mid (m,n)}}
d^{1/2}
\sum_{r \leq Q/d}
\left(\frac{Q}{rd}\right)^{1/2}\left|J_{\nu}\left(\nu \frac{Q}{rd}\right)\right|
\end{multline}
where $B_1$ is chosen such that $2^{\omega_*(n)} \leq \varepsilon_{B_1}(x)$ for all  $n\leq x$ (the existence of such a constant follows from the Prime Number Theorem).
Denote
$$
T_d = \sum_{r \leq Q/d}
\left(\frac{Q}{rd}\right)^{1/2}\left|J_{\nu}\left(\nu \frac{Q}{rd}\right)\right|.
$$
From the computation following equation 5.10 in \cite{MR0597120} we have
$$
T_d \ll  \frac{Q}{d}\nu^{-1/2} + \nu^{-1/3}.
$$
Plugging this back in \eqref{eq-bound1} we get
\begin{align*}
&\sum_{c \leq Q} \frac{K(m,n,c)}{c} 
J_{k-1}\left(\frac{4\pi \sqrt{mn}}{c}\right)
 \\   
&\qquad \ll
Q^{-1/2} \varepsilon_{B_1}(Q)
\sum_{\substack{d\leq Q \\ d\mid (m,n)}}
\left(Q d^{-1/2} \nu^{-1/2} + d^{1/2} \nu^{-1/3}\right)
 \\ 
&\qquad =
\varepsilon_{B_1}(Q)\left(
Q^{1/2}\nu^{-1/2}\sigma_{-1/2}((m,n))
+
Q^{-1/2}\nu^{-1/3}\sigma_{1/2}^Q((m,n))\right)
 \\ 
&\qquad  \ll
\varepsilon_{B_1}(Q)
\left(\sigma_{-1/2}((m,n))\frac{(mn)^{1/4}}{k} + \sigma_{1/2}^Q((m,n))\frac{k^{1/6}}{(mn)^{1/4}}\right).
\end{align*}
\end{proof}

We now give two different bounds for the summands in \eqref{eq-smn} with $c \geq Q$.
\begin{lemma}\label{lem-smn_geq_Q}
For $k\geq 16$:
\begin{multline}\label{eq-lem_cgeqQ_1}
\sum_{c\geq Q} \frac{K(m,n,c)}{c}
J_{k-1}\left(\frac{4\pi \sqrt{mn}}{c}\right)
 \\ \ll
 \varepsilon_{B_2}(Q)
\left(\sigma_{-1/2}((m,n))\frac{(mn)^{1/4}}{k} + \sigma_{1/2}((m,n))\frac{k^{1/6}}{(mn)^{1/4}}\right)
\end{multline}
for some absolute constant $B_2$,
and
\begin{equation}\label{eq-lem_cgeqQ_2}
\sum_{c\geq Q} \frac{K(m,n,c)}{c}
J_{k-1}\left(\frac{4\pi \sqrt{mn}}{c}\right)
\ll \\
\left(\frac{Q}{k} + k^{-1/3}\right).
\end{equation}
\end{lemma}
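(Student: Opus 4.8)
The plan is to handle the tail sum over $c \geq Q$ by combining the Kloosterman bound from \autoref{lem-k_mnc} with the appropriate Bessel bound from \autoref{lem-bessel_J}. The crucial observation is that when $c \geq Q = \frac{4\pi\sqrt{mn}}{\nu}$, the argument $\frac{4\pi\sqrt{mn}}{c} = \nu \cdot \frac{Q}{c}$ satisfies $\frac{Q}{c} \leq 1$, so we are in the regime $J_\nu(\nu x)$ with $0 \leq x \leq 1$ and may apply the bound $J_\nu(\nu x) \ll f(x)$. Let me sketch how to prove the two inequalities.

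Let me explain my reasoning for both bounds.

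For the first bound \eqref{eq-lem_cgeqQ_1}, I would mirror the structure of \autoref{lem-smn_leq_Q}. Writing $c = rd$ with $d \mid (m,n)$ and applying \autoref{lem-k_mnc}, the sum becomes

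$$\sum_{c\geq Q}\frac{K(m,n,c)}{c}J_\nu\!\left(\nu\frac{Q}{c}\right) \ll \varepsilon_{B_2}(Q)\sum_{\substack{d\mid(m,n)}}d^{1/2}\sum_{r\geq Q/d}\frac{1}{r}\left(\frac{Q}{rd}\right)^{-1/2}f\!\left(\frac{Q}{rd}\right).$$

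Here the key point is that the argument $x = \frac{Q}{rd}$ ranges over $(0,1]$ as $r$ grows. I would split the inner sum according to the three pieces of $f$, i.e. according to whether $x$ lies in $[0,x_0]$, $[x_0, x_\nu]$, or $[x_\nu,1]$. The tail where $x$ is small contributes the dominant $\sigma_{-1/2}((m,n))\frac{(mn)^{1/4}}{k}$ term through the geometric-type decay of $(\tfrac12 e x)^\nu$, while the boundary region near $x \approx 1$ contributes the $\sigma_{1/2}((m,n))\frac{k^{1/6}}{(mn)^{1/4}}$ term via the $\nu^{-1/3}$ plateau. Summing over $d \mid (m,n)$ (now with no truncation, yielding the full $\sigma_{1/2}$ rather than $\sigma_{1/2}^Q$) gives the stated bound, just as the computation following equation 5.10 in \cite{MR0597120} controls the analogous sums.

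For the second bound \eqref{eq-lem_cgeqQ_2}, I would instead use the \emph{trivial} Kloosterman bound $|K(m,n,c)| \leq c$, which avoids the divisor sums entirely. This replaces the sum by $\sum_{c\geq Q} \bigl|J_\nu(\nu Q/c)\bigr|$, and again using $J_\nu(\nu x) \ll f(x)$ on $(0,1]$ together with the crude estimates $f(x) \ll \nu^{-1/3}$ uniformly and $f(x) \ll (\tfrac12 e x)^\nu \nu^{-1/2}$ for small $x$, one bounds the number of relevant terms (those with $Q/c$ not too small) by $\BigO{Q}$ and each such term by $\nu^{-1/3}$, while the remaining terms are absorbed into the $k^{-1/3}$ contribution. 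The main obstacle in both parts is the careful bookkeeping at the transition point $x \approx 1$ between the exponentially decaying regime and the Airy-type plateau of the Bessel function, which is exactly where the $k^{1/6}$ and $k^{-1/3}$ factors originate; the rest is a matter of summing geometric and harmonic series against the piecewise bounds for $f$.
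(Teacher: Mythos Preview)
Your overall strategy is right, but there is a genuine gap in the argument for \eqref{eq-lem_cgeqQ_1}. You write
\[
\sum_{c\geq Q}\frac{K(m,n,c)}{c}J_\nu\!\left(\nu\frac{Q}{c}\right) \ll \varepsilon_{B_2}(Q)\sum_{d\mid(m,n)}d^{1/2}\sum_{r\geq Q/d}\cdots,
\]
pulling $\varepsilon_{B_2}(Q)$ outside the sum. But the Kloosterman bound of \autoref{lem-k_mnc} produces a factor $2^{\omega_*(r)}$, and the inequality $2^{\omega_*(r)}\leq \varepsilon_{B_1}(x)$ is only valid for $r\leq x$. In \autoref{lem-smn_leq_Q} this was harmless because $r\leq Q$; here $r$ ranges to infinity, so no uniform estimate of this type is available. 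The paper deals with this by first truncating at $c=(2Q+2)^2$: on the bounded piece $Q\leq c\leq (2Q+2)^2$ one has $r\leq (2Q+2)^2$ and may legitimately factor out $\varepsilon_{B_1}\bigl((2Q+2)^2\bigr)\ll\varepsilon_{2B_1}(Q)$, while the tail $c\geq (2Q+2)^2$ is disposed of via the \emph{trivial} bound $|K(m,n,c)|\leq c$ together with the rapid decay $J_\nu(\nu x)\ll \nu^{-1/2}(\tfrac12 ex)^\nu$. Without some such truncation your argument does not close.

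A smaller issue concerns your sketch for \eqref{eq-lem_cgeqQ_2}: you say the ``relevant'' terms number $\BigO{Q}$ and are each $\ll\nu^{-1/3}$, which yields $Q\nu^{-1/3}$ rather than $Q/k$. The correct count is finer: the plateau region $x\in[x_\nu,1]$ contains only $\BigO{Q\nu^{-2/3}}$ values of $c$ (since $1-x_\nu\asymp\nu^{-2/3}$), contributing $Q\nu^{-1}$; the middle range $x\in[x_0,x_\nu]$ must then be integrated against the full exponential factor in $f$, which is precisely what the computation you cite (Rankin, eq.~5.12) carries out.
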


\begin{proof}
\global\long\def\QQbound{(2Q+2)^2}
We have
\begin{equation}\label{eq-bound2}
\left|
\sum_{c \geq Q} \frac{K(m,n,c)}{c} 
J_{k-1}\left(\frac{4\pi \sqrt{mn}}{c}\right)
\right|  
\leq 
M + R
\end{equation}
with
\begin{align*}
&M = 
\sum_{Q \leq c\leq \QQbound}
\left|
\frac{K(m,n,c)}{c}
\right|
\left|
J_{k-1}\left(\frac{4\pi \sqrt{mn}}{c}\right) 
\right| \\
&R  = 
\sum_{c\geq \QQbound}
\left|
\frac{K(m,n,c)}{c}
\right|
\left|
J_{k-1}\left(\frac{4\pi \sqrt{mn}}{c}\right) 
\right|.
\end{align*}

We begin by considering $R$.
using the trivial bound $|K(m,n,c)| \leq c$ and \autoref{lem-bessel_J} we have
\begin{multline*}
R 
\leq 
\sum_{c\geq \QQbound}
\left|J_\nu\left(\nu\frac{Q}{c}\right)
\right|
\leq
\nu^{-1/2}
\left(\frac{1}{2}eQ\right)^\nu
\sum_{c\geq \QQbound}
\frac{1}{c^\nu}
 \\ \leq
\nu^{-1/2}(2Q+2)^{-\nu + 2}
\leq
k^{-1/2}\left(\frac{\sqrt{mn}}{k} + 2\right)^{-k + 2}
\end{multline*}
which is going to be negligible for $k>2$.

As for $M$, we give a bound in two different ways.
First, we can again use the trivial bound $|K(m,n,c)| \leq c$ and \autoref{lem-bessel_J} to get:
\begin{equation*}
M
\leq 
\sum_{Q \leq c\leq \QQbound}
\left|J_\nu\left(\nu\frac{Q}{c}\right)
\right|
\ll
\sum_{Q \leq c}
f\left(\frac{Q}{q}\right).
\end{equation*}
This last sum is estimated in \cite[eq. 5.12]{MR0597120}, which gives $M \ll\left( \frac{Q}{k} + k^{-1/3}\right)$, which proves \eqref{eq-lem_cgeqQ_2}. 

In order to prove \eqref{eq-lem_cgeqQ_1} we bound $M$ in a different way.
From \autoref{lem-k_mnc} we have 
$$
M\leq
\sum_{\substack{d\mid (m,n)}}
\sum_{\substack{ Q/d \leq r  \leq \QQbound}}
2^{\omega_*(r)}r^{-1/2}\left|J_{\nu}\left(\nu \frac{Q}{rd}\right)\right|
$$
This gives
\begin{equation}\label{eq-M_Td}
M \ll
Q^{-1/2}\varepsilon_{B_1}\left(\QQbound\right)
\sum_{d \mid (m,n)} d^{1/2} T_d 
\end{equation}
with $B_1$ chosen as in \autoref{lem-smn_leq_Q} and
$$
T_d = \sum_{r\geq Q/d}\left(\frac{Q}{rd}\right)^{1/2}
\left|
J_{\nu}\left(\nu \frac{Q}{rd}\right) 
\right|.
$$
From \autoref{lem-bessel_J} we have
\begin{align}\label{eq-td_bound}
\begin{split}
T_d \ll &
 \sum_{r\geq Q/d}  
\left(\frac{Q}{rd}\right)^{1/2}
f\left(\frac{Q}{rd}\right)
 \\
& \ll
\int_{Q/d}^\infty
\left(\frac{Q/d}{u}\right)^{1/2}
f\left(\frac{Q/d}{u}\right) \mathrm{d}u +  f(1)
\\
& =
 \left(\frac{Q}{d}\right)\int_0^1 x^{-\frac{3}{2}}f(x) \mathrm{d}x + \nu^{-1/3}
 \\
& = 
\nu^{-1/3} +  \left(\frac{Q}{d}\right)\nu^{-1/2}\int_0^{x_0}
x^{-\frac{3}{2}}\left(\frac{1}{2}ex\right)^\nu \mathrm{d}x
 \\
& 
\quad + \left(\frac{Q}{d}\right) \nu^{-1/2} \int_{x_0}^1
x^{-\frac{3}{2}} (1-x^2)^{-1/4}
\exp{\frac{1}{3}\nu(1-x^2)^{3/2}}\mathrm{d}x.
\end{split}
\end{align}

We have
$$
\int_0^{x_0}
x^{-\frac{3}{2}}\left(\frac{1}{2}ex\right)^\nu dx 
\ll
 \frac{\left(\frac{1}{2}e x_0\right)^\nu}{\nu}
<
 \frac{0.9^\nu}{\nu}
$$
since $\frac{1}{2}e x_0 < 0.9$.
Also, we have
\begin{multline*}
\int_{x_0}^1
x^{-\frac{3}{2}} (1-x^2)^{-1/4}
\exp{-\frac{1}{3}\nu(1-x^2)^{3/2}}\mathrm{d}x
 \\ \ll
 x_0^{-\frac{5}{2}}
\int_{x_0}^1
x (1-x^2)^{-1/4}
\exp{-\frac{1}{3}\nu(1-x^2)^{3/2}}\mathrm{d}x
\ll
\nu^{-1/2}
\end{multline*}
where the last inequality follows from the computation of $I_2$ on page 159 in \cite{MR0597120}.

Plugging these bounds back into \eqref{eq-td_bound}, we get
$$
T_d 
\ll  \nu^{-1}\frac{Q}{d} +  \nu^{-1/3}.
$$

And so, we have from \eqref{eq-M_Td} that
\begin{align*}
\begin{split}
&M \ll
 Q^{-1/2}\varepsilon_{B_1}\left(\QQbound\right)
\sum_{d \mid (m,n)} d^{1/2} T_d 
 \\ 
&\qquad \ll
\varepsilon_{B_1}\left(\QQbound\right)
\left(
\frac{Q^{1/2}}{\nu^{1/2}}\sigma_{-1/2}((m,n)) + 
\frac{Q^{-1/2}}{\nu^{1/3}}\sigma_{1/2}((m,n))\right)
 \\ 
&\qquad \ll
\varepsilon_{2B_1}(Q)
\left(\sigma_{-1/2}((m,n))\frac{(mn)^{1/4}}{k} + \sigma_{1/2}((m,n))\frac{k^{1/6}}{(mn)^{1/4}}\right).
\end{split}
\end{align*}

This completes the proof of \eqref{eq-lem_cgeqQ_1} with $B_2 = 2B_1$.
\end{proof}

We now show how \autoref{prop-pkmn_bound} and \autoref{prop-pkmm} follow from the estimates above.
\begin{proof}[Proof of \autoref{prop-pkmn_bound}]
For $n\neq m$ we have that 
$$
\pkm{n} = 2\pi i^k \left(\frac{n}{m}\right)^{\frac{k-1}{2}}S_{m,n}.
$$
We can bound $|S_{m,n}|$ using \autoref{lem-smn_leq_Q} and equation \eqref{eq-lem_cgeqQ_1} from \autoref{lem-smn_geq_Q}.
This gives
$$
\pkm{n}
\ll
 \varepsilon_B(Q)
\left(\sigma_{-1/2}((m,n))\frac{(mn)^{1/4}}{k} + \sigma_{1/2}((m,n))\frac{k^{1/6}}{(mn)^{1/4}}\right).
$$
The proposition then follows from
$$
\sigma_{-1/2}((m,n)) \ll_\epsilon (mn)^{\epsilon}
$$
and
$$
\sigma_{1/2}((m,n)) 
\ll (m,n)^{1/2}\sigma_{0}((m,n))
\ll_\epsilon(m,n)^{1/2 } (mn)^\epsilon.
$$
\end{proof}

\begin{proof}[Proof of \autoref{prop-pkmm}]
We have that
$$
\pkm{m} = 1 + 2\pi i^k S_{m,m}.
$$
We can bound $|S_{m,m}|$ using \autoref{lem-smn_leq_Q} and equation \eqref{eq-lem_cgeqQ_2} from \autoref{lem-smn_geq_Q}.
This gives
\begin{equation*}
S_{m,m}
\ll 
\varepsilon_B(Q)
\left(\sigma_{-1/2}(m)\frac{m^{1/2}}{k} + \sigma_{1/2}^Q(m)\frac{k^{1/6}}{m^{1/2}}\right)
+\frac{Q}{k} + k^{-1/3}.
\end{equation*}
We note also that in this case we have $Q = \frac{4\pi m}{k-1}$.

We use the bounds $\varepsilon_B(Q) \ll_\epsilon m^\epsilon$, $\sigma_{-1/2}(m) \ll_\epsilon m^\epsilon$, and
$$
\sigma_{1/2}^Q(m)
\leq Q^{1/2}\sigma_{0}(m)
\ll_\epsilon Q^{1/2}m^{\epsilon}.
$$
Using this we get
$$
S_{m,m}
\ll_\epsilon
\frac{m^{1/2 + \epsilon}}{k}
+ 
\frac{m^\epsilon}{k^{1/3}}
+
\frac{m}{k^2}
+ k^{-1/3}.
$$
So that $S_{m,m} \lll 1$ when $m \ll k^{2 - 3\epsilon}$ and the result follows.
\end{proof}

\section{Mass equidistribution of \texorpdfstring{$P_{k,m}$}{Pkm}}\label{sec-mass}
\subsection{The unfolding method}
We now turn our attention back to the mass equidistribution of $P_{k,m}$.
For this we follow the standard method of considering the inner product of $\left|P_{k,m}(z)\right|^2$ against function in the spectral decomposition of $\Delta$ on $L^2\left(\mathrm{SL}_2(\mathbb{Z}) \backslash \mathbb{H}\right)$.
The spectrum is known to consist of three types of functions:
\begin{enumerate}
    \item The constant function $\phi(z) = \sqrt{\frac{3}{\pi}}$.
    \item A sequence $\phi_i(z)$ of Maass cusp forms with associated eigenfunctions $\lambda_i$ so that $\Delta \phi_i +\lambda_i \phi_i = 0$.
    \item The unitary Eisenstein series $\phi_t(z) = E\left(z, \frac{1}{2} + it\right)$ for $t\in \RR$ which satisfy $\Delta \phi_t + \left( \frac{1}{4} + t^2\right)\phi_t = 0$.
\end{enumerate}
The inner product with the constant function corresponds to normalizing our functions $P_{k,m}$.
We then wish to show that the inner product of the normalized functions against the non-constant function in the spectrum tends to $0$ with $k$.
In other words, we wish to show that
$$
\frac{\BRAK{P_{k,m}\phi,P_{k,m}}}{\BRAK{P_{k,m},P_{k,m}}}
\xrightarrow{k\rightarrow\infty} 0
$$
where $\phi$ is any non-constant function in the spectrum.

In order to estimate $\BRAK{P_{k,m},P_{k,m}}$ we use property \eqref{eq-inner_product_pkm} regarding the way the Poincar\'e series behave with the Petersson inner product.
Together with \autoref{prop-pkmm} we conclude that for $m \ll k^{2 - \epsilon}$:
\begin{equation}\label{eq-pkm_normalization}
\BRAK{P_{k,m},P_{k,m}} = \frac{\Gamma\left(k-1\right)}{\left(4\pi m \right)^{k-1}}\pkm{m}\sim \frac{\Gamma\left(k-1\right)}{\left(4\pi m \right)^{k-1}}.
\end{equation}

We now wish to estimate $\BRAK{P_{k,m}\phi,P_{k,m}}$.
For this we use the unfolding method:
\begin{align}\label{eq-unfolding}
\begin{split}
\BRAK{P_{k,m}\phi,P_{k,m}} & =
\int_{\mathcal{F}}
P_{k,m}(z) \phi(z) 
\overline{P_{k,m}(z)} y^k
\mathrm{d}\mu
 \\ & =
\int_{\mathcal{F}}
P_{k,m}(z) \phi(z) y^k
\sum_{\gamma\in\Gamma_\infty \backslash \SL}
\frac{\overline{e(m\gamma z)}}{\overline{j(\gamma,z)^k}}
\mathrm{d}\mu
\\ & =
\int_{\mathcal{F}}
\sum_{\gamma\in\Gamma_\infty \backslash \SL}
\phi(\gamma z)
\frac{P_{k,m}(\gamma z)}{j(\gamma,z)^k}
\frac{\overline{e(m\gamma z)}}{\overline{j(\gamma,z)^k}}
y^k \mathrm{d}\mu 
\\ & =
\int_{\mathcal{F}}
\sum_{\gamma\in\Gamma_\infty \backslash \SL}
\phi(\gamma z) P_{k,m}(\gamma z)
\overline{e(m\gamma z)} \Im(\gamma z)^k \mathrm{d}\mu
\\ & =
\int_0^\infty\int_0^1
\phi(z)P_{k,m}(z)
e^{-2\pi i m x}
e^{-2\pi m y} y^k  \frac{\mathrm{d}x \, \mathrm{d}y }{y^2}.
\end{split}
\end{align}

For a smooth function $f:\HH\rightarrow\CC$ satisfying $f(z + 1) = f(z)$, we denote its $n$-th Fourier coefficient at height $y$ by $\widehat{f}[n](y)$, so that 
$$
f(x+ iy) = \sum_{n\in \ZZ}\widehat{f}[n](y)e(nx).
$$

With this notation, \eqref{eq-unfolding} becomes
\begin{equation}\label{eq-int_m_fourier}
\BRAK{P_{k,m}\phi,P_{k,m}}
=
\int_0^\infty
\widehat{\phi \cdot P_{k,m}}[m](y) e^{-2\pi m y}y^{k-2} dy.  
\end{equation}
In the following subsections we will estimate this integral, both in the case where $\phi$ is a Maass cusp form (\autoref{prop-inner_prod_maass}) and in the case where $\phi$ is a unitary Eisenstein series (\autoref{prop-inner_prod_eisenstein}).
We will then prove \autoref{thm-mass_equi} using these estimates.

\subsection{Inner product with Hecke-Maass cusp forms}
We first consider \eqref{eq-int_m_fourier} in the case where $\phi$ is a Maass cusp form.
Our main result is the following proposition:
\begin{proposition}\label{prop-inner_prod_maass}
Let $\phi$ be a Maass cusp form, and let $\epsilon > 0$.
Then for any $m(k)$ satisfying $1 \leq  m \ll k^{2-\epsilon}$ we have
$$
\frac{\BRAK{P_{k,m}\phi, P_{k,m}}}
{\BRAK{P_{k,m}, P_{k,m}}}
\ll_{\phi,\epsilon}
\left(
\frac{m^{1 +  \epsilon}}{k^{\frac{3}{2}}}
+
\frac{m^{ \epsilon}}{ k^{\frac{1}{3}}}
\right).
$$
\end{proposition}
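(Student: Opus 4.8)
The starting point is the unfolded expression \eqref{eq-int_m_fourier}, into which I plan to insert the Fourier expansions of both factors. Writing the Maass cusp form with eigenvalue $\tfrac14+t^2$ (so $t$ is real and nonzero on $\SL\backslash\HH$) as $\phi(x+iy)=\sum_{\ell\neq 0}\rho_\phi(\ell)\sqrt{y}\,K_{it}(2\pi|\ell|y)e(\ell x)$, and recalling $P_{k,m}(z)=\sum_{n\geq 1}\pkm{n}e(nx)e^{-2\pi ny}$, the extraction of the $m$-th Fourier coefficient of the product in $x$ forces $\ell=m-n$. Since $\phi$ has no constant term, the term $n=m$ drops out, and \eqref{eq-int_m_fourier} becomes
\[
\BRAK{P_{k,m}\phi, P_{k,m}}=\sum_{\substack{n\ge 1\\ n\ne m}}\rho_\phi(m-n)\,\pkm{n}\,I_n,
\qquad
I_n=\int_0^\infty K_{it}\!\left(2\pi|m-n|y\right)e^{-2\pi(m+n)y}y^{k-\frac32}\,dy .
\]

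Next I evaluate $I_n$. Using the integral representation $K_{it}(u)=\int_0^\infty e^{-u\cosh\theta}\cos(t\theta)\,d\theta$, interchanging orders, and performing the Gamma integral $\int_0^\infty y^{k-\frac32}e^{-cy}\,dy=\Gamma(k-\tfrac12)c^{-(k-\frac12)}$, I obtain
\[
I_n=\frac{\Gamma\!\left(k-\tfrac12\right)}{(2\pi)^{k-\frac12}}\int_0^\infty\frac{\cos(t\theta)}{\bigl(|m-n|\cosh\theta+m+n\bigr)^{k-\frac12}}\,d\theta .
\]
Since $|m-n|+m+n=2\max(m,n)$, I factor out $\bigl(2\max(m,n)\bigr)^{-(k-\frac12)}$ and reduce to bounding $J=\int_0^\infty\cos(t\theta)\,g(\theta)\,d\theta$ with $g(\theta)=\bigl(1+\alpha(\cosh\theta-1)\bigr)^{-(k-\frac12)}$, $\alpha=|m-n|/(2\max(m,n))\in[0,\tfrac12)$. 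A single integration by parts, using that $g$ is positive and decreasing with $g(0)=1$, gives $|J|\le |t|^{-1}\int_0^\infty|g'(\theta)|\,d\theta=|t|^{-1}\ll_\phi 1$, uniformly in $m,n,k$. Combining with the normalization \eqref{eq-pkm_normalization} (valid since $m\ll k^{2-\epsilon}$, via \autoref{prop-pkmm}) and Stirling's formula $\Gamma(k-\tfrac12)/\Gamma(k-1)\asymp k^{1/2}$ yields the clean prefactor
\[
\frac{I_n}{\BRAK{P_{k,m},P_{k,m}}}\ll_\phi \frac{k^{1/2}}{m^{1/2}}\left(\frac{m}{\max(m,n)}\right)^{k-\frac12}.
\]

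The decisive structural point is that this prefactor carries an exponential suppression that exactly complements the growth of $\pkm{n}$. By \autoref{prop-pkmn_bound}, $\pkm{n}$ contains the factor $(n/m)^{(k-1)/2}$; upon multiplying, the combined exponential factor is $(n/m)^{(k-1)/2}$ for $n<m$ and $(m/n)^{k/2}$ for $n>m$, both of size $\asymp\exp\!\bigl(-k|m-n|/(2m)\bigr)$. Hence the sum localizes to $|m-n|\lesssim m/k$, and writing $\ell=|m-n|$ the residual weight $w_\ell\asymp\exp(-k\ell/(2m))$ has total mass $\sum_\ell w_\ell\ll m/k$ uniformly. It then remains to estimate $\sum_{\ell}|\rho_\phi(\ell)|\,B_\ell\, w_\ell$, where $B_\ell$ is the bracket of \autoref{prop-pkmn_bound} at $n=m\pm\ell$ (so $(nm)^{1/4}\asymp m^{1/2}$ and $(m,n)=(m,\ell)$). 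Here a pointwise Hecke or Kim--Sarnak bound on $\rho_\phi(\ell)$ is insufficient, as it would leave a spurious $m^{\theta}$; instead I use the Rankin--Selberg mean value $\sum_{\ell\le x}|\rho_\phi(\ell)|^2\ll_\phi x$ together with $\sum_{\ell\le x}(m,\ell)\ll_\epsilon x\,m^\epsilon$, Cauchy--Schwarz, and partial summation against the smooth weight $w_\ell$. The first term of $B_\ell$ (of size $m^{1/2+\epsilon}/k$) then contributes $\ll_\phi \tfrac{m^\epsilon}{k^{1/2}}\cdot\tfrac{m}{k}=\tfrac{m^{1+\epsilon}}{k^{3/2}}$, while the second term ($(m,\ell)^{1/2}k^{1/6}/m^{1/2-\epsilon}$) contributes $\ll_{\phi,\epsilon}\tfrac{k^{2/3}}{m^{1-\epsilon}}\cdot m^\epsilon\cdot\tfrac{m}{k}=\tfrac{m^{\epsilon}}{k^{1/3}}$, which is exactly the asserted bound.

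The main obstacle is this last arithmetic sum: the power saving genuinely rests on exploiting cancellation on average via Rankin--Selberg rather than on pointwise coefficient bounds, and on handling the $\gcd$ factor $(m,\ell)^{1/2}$ efficiently — both executed against the exponentially localized weight by partial summation. By contrast, the analytic inputs (the evaluation of $I_n$ and the uniform estimate $J\ll_\phi 1$) are comparatively soft once the integral representation of $K_{it}$ is invoked; the only care required there is to keep every estimate uniform in $k,m,n$. Finally, the hypothesis $m\ll k^{2-\epsilon}$ enters solely through \autoref{prop-pkmm}, which secures the normalization \eqref{eq-pkm_normalization} used throughout.
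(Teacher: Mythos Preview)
Your argument is correct and follows the same architecture as the paper: unfold, insert the Fourier expansions so that only $n\neq m$ survives, feed in \autoref{prop-pkmn_bound}, and control the arithmetic sum via Cauchy--Schwarz together with the Rankin--Selberg mean square $\sum_{\ell\le x}|\rho_\phi(\ell)|^2\ll_\phi x$ and the elementary $\sum_{\ell\le x}(m,\ell)\ll_\epsilon x\,m^\epsilon$. The identification of the localization scale $|m-n|\lesssim m/k$ and the realization that pointwise bounds on $\rho_\phi(\ell)$ would lose a power are exactly the points the paper hinges on as well.

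The one genuine technical difference is how the $K$-Bessel integral is treated. The paper uses the pointwise bound $K_{it}(x)\ll e^{-x}x^{-1/2}$ (uniform in $t$), which produces an extra factor $|m-n|^{-1/2}$ and leads to the pairing $\sum_\ell \lambda(\ell)^2/\ell\ll_\epsilon m^\epsilon$ inside Cauchy--Schwarz. You instead keep the integral representation of $K_{it}$, perform the $y$-integral exactly, and bound the remaining $\theta$-integral by a single integration by parts, obtaining $|J|\le |t|^{-1}$ uniformly in $m,n,k$. This trades the $|m-n|^{-1/2}$ for a flat $k^{1/2}m^{-1/2}$ prefactor and forces you to pair $\bigl(\sum|\rho_\phi|^2 w_\ell\bigr)^{1/2}\bigl(\sum w_\ell\bigr)^{1/2}$ instead; the endpoint bounds coincide. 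Your route is slightly slicker for fixed $\phi$, but note that the paper's pointwise Bessel bound is uniform in $t$, which is what lets the same computation be recycled verbatim for the Eisenstein contribution in \autoref{prop-inner_prod_eisenstein}; your bound $|J|\le|t|^{-1}$ would need a companion estimate near $t=0$ there.
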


\begin{proof}
Let $\epsilon >0$, let $m(k)$ be a function satisfying $1 \leq m \ll k^{2-\epsilon}$, and let $\phi$ be a normalized Maass cusp form.
We can assume that $\phi$ is a Hecke Maass cusp form, by choosing an appropriate basis for each eigenspace.

We first consider the case where $\phi$ is an odd Maass cusp forms, i.e. $\phi(-\overline{z}) = -\phi(z)$.
Since $P_{k,m}(-\overline{z}) = \overline{P_{k,m}(z)}$ we can deduce that 
$$
\left|P_{k,m}(-\overline{z})\right|^2 = \left|P_{k,m}(z)\right|^2.
$$
From the definition of the Petersson inner product it then follows that
$$
\BRAK{P_{k,m}\phi, P_{k,m}}
=
\BRAK{\phi, \left|P_{k,m}\right|^2}
=0.
$$

Therefore we are left with the case where $\phi$ is an even Hecke Maass cusp form, i.e. $\phi(-\overline{z}) = \phi(z)$.
In this case $\phi$ has the form
$$
\phi(z) = \rho y^{1/2}\sum_{n\neq 0}\lambda(|n|)K_{it}(2\pi|n|y)e( n x).
$$
Here $\frac{1}{4} + t^2$ is the eigenvalue of $\phi$ with respect to $-\Delta$, $K$ is the modified Bessel function of the third kind, $\rho>0$ is a normalization factor, and $\lambda(|n|)$ are the eigenvalues of $\phi$ with respect to the Hecke operators.

We will ignore the normalization factor $\rho$, since we will regard $\phi$ as being fixed.
We will also require the Rankin-Selberg second moment bound \cite{MR411},\cite{MR2626}:
\begin{equation}\label{eq-rs_second_moment_full}
\sum_{0 < n\leq x} \lambda^2(n) \ll_\phi x
\end{equation}
which by partial summation implies
\begin{equation}\label{eq-rs_second_moment}
\sum_{0 < n\leq x} \frac{\lambda^2(n)}{n} \ll_{\phi,\epsilon} x^{\epsilon} 
\end{equation}
and
\begin{equation}\label{eq-rs_second_moment2}
\sum_{n=1}^{\infty} \frac{\lambda^2(n)}{n^{1+\epsilon}} \ll_{\phi,\epsilon} 1. 
\end{equation}
We now wish to estimate $\widehat{\phi P_{k,m}}[m]$.
By the product convolution property of the Fourier transform, we have that
$$
\widehat{\phi P_{k,m}}[m](y) =
\sum_{\substack{r + s = m \\ r,s \in \ZZ}}
\widehat{P_{k,m}}[r](y)\cdot \widehat{\phi}[s](y)
=
a_1(y) +   a_2(y)
$$ 
with
\begin{align*}
a_1(y)
=&
\;y^{\frac{1}{2}}\sum_{\substack{r + s = m \\ r,s \geq 1}}
\pkm{r}e^{-2\pi r y} \lambda(s) K_{it}(2\pi s y), \\
a_2(y) 
=&
\;y^{\frac{1}{2}}\sum_{\substack{r - s = m \\  r\geq m + 1 \\ s \geq 1}}
\pkm{r}e^{-2\pi r y} \lambda(s) K_{it}(2\pi s y)
\end{align*}
where we used that for $r\geq 1, s \neq 0$:
$$
\widehat{P_{k,m}}[r](y) = \pkm{r}e^{-2\pi r y},\quad
\widehat{\phi}[s](y) = y^{1/2}\lambda(|s|) K_{it}(2\pi |s| y).
$$
Plugging this into \eqref{eq-int_m_fourier} we get
\begin{equation}\label{eq-brak_ineq_a12}
\left|\BRAK{P_{k,m}\phi, P_{k,m}}\right|
\leq
\left|
\int_0^\infty
a_1(y) e^{-2\pi m y}y^{k-1} \frac{\mathrm{d}y}{y}
\right|
+
\left|
\int_0^\infty
a_2(y) e^{-2\pi m y}y^{k-1} \frac{\mathrm{d}y}{y}
\right|.
\end{equation}

We now estimate each of these integrals individually.
We begin with the integral involving $a_1$.
We have the following uniform bound for the $K$ Bessel function of imaginary order \cite[Corollary 3.2]{MR3102912}:
\begin{equation}\label{eq-K_bessel_bound}
K_{it}(x) \ll \frac{e^{-x}}{\sqrt{x}}.    
\end{equation}

Using this bound we get
\begin{multline*} 
a_1(y) 
\ll
\sum_{n=1}^{m-1}
\left|\pkm{n}\right|
\left| \frac{\lambda(m-n)}{(m-n)^{1/2}}\right| 
e^{-2\pi n y}\cdot e^{-2\pi (m-n)y} \\
\ll_{\epsilon}
e^{-2\pi m y}(M_1 + E_1)
\end{multline*}
where
\begin{align*}
& M_1= 
\sum_{n=1}^{m-1}
\left(\frac{n}{m}\right)^{\frac{k-1}{2}}
\frac{(nm)^{\frac{1}{4} + \epsilon}}{k}
\left| \frac{\lambda(m-n)}{(m-n)^{1/2}}\right| \\
& E_1 = 
\sum_{n=1}^{m-1}
\left(\frac{n}{m}\right)^{\frac{k-1}{2}}
\frac{k^{1/6}(m,n)^{\frac{1}{2}}}{(nm)^{\frac{1}{4} - \epsilon}}
\left| \frac{\lambda(m-n)}{(m-n)^{1/2}}\right|.
\end{align*}

We first consider $M_1$.
Using the Cauchy–Schwarz inequality we get
$$
M_1 \ll \frac{m^{\frac{1}{2} + 2\epsilon}}{k} 
\left(\sum_{n=1}^{m-1}\left(\frac{n}{m}\right)^{k - \frac{1}{2} + 2\epsilon}\right)^{\frac{1}{2}}
\cdot
\left(\sum_{n=1}^{m-1}\frac{\lambda^2(n)}{n}\right)^{\frac{1}{2}}.
$$
We use the Rankin-Selberg second moment estimate \eqref{eq-rs_second_moment} to bound the second sum
$$
\sum_{n=1}^{m-1}\frac{\lambda^2(n)}{n} \ll_{\phi, \epsilon} m^{\epsilon}.
$$
We bound the first sum using an integral.
Specifically, if $f$ is a smooth non-negative function such that $f'$ changes signs a finite amount of times (say $C$ times), then
\begin{equation}\label{eq-int_bound}
\sum_{a}^{b}f(n) \leq \int_{a}^{b}f(w)\mathrm{d}w + 2C\mathop{\max}_{n\in[a,b]}f(n).
\end{equation}
Using this we find that
\begin{equation*}
\sum_{n=1}^{m-1}\left(\frac{n}{m}\right)^{k - \frac{1}{2} + 2\epsilon}
\ll
m\int_{0}^{1}w^{k - \frac{1}{2} + 2\epsilon} \mathrm{d}w \; + \; 1
\ll \frac{m}{k} + 1.
\end{equation*}
Combining these estimates we get:
\begin{equation*}
M_1  \ll
\frac{m^{\frac{1}{2} + 2\epsilon}}{k} 
\cdot
\left( \frac{m}{k} + 1\right)^{\frac{1}{2}}
\cdot 
m^{\epsilon}
\ll_\epsilon \frac{m^{1 + 3\epsilon}}{k^{\frac{3}{2}}}
+ \frac{m^{\frac{1}{2} + 3\epsilon}}{k}.
\end{equation*}

Bounding $E_1$ is done in a similar fashion after enumerating over the value of $(m,n)$:
\begin{align*}
E_1 
& \ll 
\frac{k^{1/6}}{m^{\frac{1}{2} - 2\epsilon}}
\sum_{n=1}^{m-1}
(n,m)^{\frac{1}{2} }
\left(\frac{n}{m}\right)^{\frac{k}{2} - \frac{3}{4} + \epsilon}
\left| \frac{\lambda(m-n)}{(m-n)^{1/2}}\right|
\\& \leq
\frac{k^{1/6}}{m^{\frac{1}{2} - 2\epsilon}}
\left(\sum_{n=1}^{m-1}\frac{\lambda^2(n)}{n}\right)^{\frac{1}{2}}
\left(\sum_{n=1}^{m-1}\left(\frac{n}{m}\right)^{k - \frac{3}{2} + 2\epsilon}(n,m)\right)^{\frac{1}{2}}
\\& \ll_\phi
\frac{k^{1/6}}{m^{\frac{1}{2} - 2\epsilon}}
\cdot m^{\epsilon} \cdot 
\left(\sum_{g\mid m} g  
\sum_{n' = 1}^{m/g - 1} \left(\frac{n'}{m/g}\right)^{k - \frac{3}{2} + 2\epsilon}
\right)^{\frac{1}{2}}
\\& \ll_\epsilon
\frac{k^{1/6}}{m^{\frac{1}{2} - 3\epsilon}}
\left(
\sum_{g\mid m} g \left(\frac{m/g}{k} + \left(1 - \frac{g}{m}\right)^{k - \frac{3}{2} + 2\epsilon}\right)\right)^{\frac{1}{2}}
\\&  \ll_\epsilon
\frac{k^{1/6}}{m^{\frac{1}{2} - 3\epsilon}}
\left(\sigma_0(m) \frac{m}{k} \right)^{\frac{1}{2}}
\ll_\epsilon
\frac{m^{4\epsilon}}{k^{\frac{1}{3}}}
\end{align*}
where we used the bound $g\left(1 - \frac{g}{m}\right)^k \ll \frac{m}{k}$.

It follows that 
$$
a_1(y) \ll_{\phi,\epsilon}
e^{-2\pi m y}
\left( \frac{m^{1 + 3\epsilon}}{k^{\frac{3}{2}}}
+ 
\frac{m^{\frac{1}{2} + 3\epsilon}}{k}
+
\frac{m^{ 4\epsilon}}{k^{\frac{1}{3}}}
\right)
\ll
e^{-2\pi m y}
\left( \frac{m^{1 + 3\epsilon}}{k^{\frac{3}{2}}}
+
\frac{m^{ 4\epsilon}}{k^{\frac{1}{3}}}
\right).
$$
Thus, we have
\begin{align}\label{eq-int_a_1}
\begin{split}
&\int_0^\infty
a_1(y)e^{-2\pi m y}y^{k-1} \frac{\mathrm{d}y}{y}
  \ll_{\phi,\epsilon}
\left( \frac{m^{1 + 3\epsilon}}{k^{\frac{3}{2}}}
+ 
\frac{m^{ 4\epsilon}}{k^{\frac{1}{3}}}
\right)
\int_0^\infty
e^{-4\pi m y} y^{k - 1}\frac{\mathrm{d}y}{y} 
\\ & \qquad 
= \left( \frac{m^{1 + 3\epsilon}}{k^{\frac{3}{2}}}
+
\frac{m^{ 4\epsilon}}{k^{\frac{1}{3}}}
\right)
\frac{\Gamma\left(k - 1\right)}{(4\pi m)^{k - 1}}.
\end{split}
\end{align}

We now consider $a_2$.
Once more using the bound \eqref{eq-K_bessel_bound} for $K_{it}(x)$ we get
\begin{multline*}
a_2(y)  \\
\ll_{\phi,\epsilon}
\sum_{n > m }
\left( \frac{n}{m} \right)^{\frac{k-1}{2}}
\left( 
\frac{(nm)^{\frac{1}{4} + \epsilon}}{k}
+ \frac{k^{1/6}(n,m)^{\frac{1}{2}}}{(nm)^{\frac{1}{4} - \epsilon}}
\right)
\left| \frac{\lambda(m-n)}{(m-n)^{1/2}}\right|
e^{-4\pi n y}e^{2\pi m y}
 \\ \ll
e^{2\pi m y}\left(M_2(y) + E_2(y)\right)
\end{multline*}
with 
\begin{align*}
& M_2(y) = 
\frac{m^{\frac{1}{2} + 2\epsilon}}{k}
\sum_{n > m}
\left(\frac{n}{m}\right)^{\frac{k}{2} - \frac{1}{4} + \epsilon}
\left| \frac{\lambda(n-m)}{(n-m)^{1/2}}\right|
\exp{-4\pi n y }
\\
& E_2(y) = 
\frac{k^{1/6}}{m^{\frac{1}{2} - 2\epsilon}}
\sum_{n > m} (n,m)^{\frac{1}{2}}
\left(\frac{n}{m}\right)^{\frac{k}{2} - \frac{3}{4} + \epsilon}
\left| \frac{\lambda(n-m)}{(n-m)^{1/2}}\right|
\exp{-4\pi n y}
\end{align*}

We remind the reader that we are interested in bounding
\begin{align*}
&
\int_0^\infty a_2(y)e^{-2\pi m y}y^{k-2} \mathrm{d}y 
 \\ & \qquad \ll_{\epsilon}
\int_0^\infty e^{2\pi m y}\left(M_2(y) + E_2(y)\right)e^{-2\pi m y}y^{k-2} \mathrm{d}y 
 \\ & \qquad =
\int_0^\infty M_2(y)y^{k-2} \mathrm{d}y 
+
\int_0^\infty E_2(y)y^{k-2} \mathrm{d}y .
\end{align*}

We first consider the integral involving $M_2$.
\begin{align*}
&\int_0^\infty M_2(y)y^{k-2} \mathrm{d}y 
\\ 
&\qquad =
\frac{m^{\frac{1}{2} + 2\epsilon}}{k}
\sum_{n > m}
\left(\frac{n}{m}\right)^{\frac{k}{2} - \frac{1}{4} + \epsilon}
\left| \frac{\lambda(n-m)}{(n-m)^{1/2}}\right|
\int_{0}^{\infty}
\exp{-4\pi m y \frac{n}{m}}y^{k-2} \mathrm{d}y
\\
&\qquad =
\frac{\Gamma(k-1)}{(4\pi m)^{k-1}}
\frac{m^{\frac{1}{2} + 2\epsilon}}{k}
\sum_{n > m}
\left(\frac{n}{m}\right)^{\frac{k}{2} - \frac{1}{4} + \epsilon}
\left| \frac{\lambda(n-m)}{(n-m)^{1/2}}\right|
\left(\frac{m}{n}\right)^{k-1}.
\end{align*}
Using Cauchy-Schwarz we then get
\begin{align*}
\int_0^\infty M_2(y)y^{k-2} \mathrm{d}y & \ll \frac{\Gamma(k-1)}{(4\pi m)^{k-1}} \frac{m^{\frac{1}{2} + 3\epsilon}}{k}
\\ 
&\times 
\left(\sum_{n>m} \left(\left(\frac{n}{m}\right)^{-\frac{k}{2}+\frac{3}{4}+\epsilon} \left(\frac{n}{m} - 1\right)^{\epsilon}\right)^2
\right)^{\frac{1}{2}}
\left(
\sum_{n > m}
\frac{\lambda^2(n)}{n^{1 + 2\epsilon}}
\right)^{\frac{1}{2}}.
\end{align*}
From \eqref{eq-rs_second_moment2} we have that 
$$
\sum_{n > m}
\frac{\lambda^2(n)}{n^{1 + 2\epsilon}}
\ll_\epsilon 1.
$$
For the first sum we use \eqref{eq-int_bound} to bound the sum using an integral.
We get that
\begin{align*}
&\sum_{n>m} \left(\frac{n}{m}\right)^{-k+\frac{3}{2}+2\epsilon} \left(\frac{n}{m} - 1\right)^{2\epsilon}
\\
& \qquad \ll
m\int_0^\infty w^{2\epsilon}(w+1)^{-k+\frac{3}{2}+2\epsilon} \mathrm{d}w
+ \left(\frac{1}{m}\right)^{2\epsilon}
\\
& \qquad \ll
m \cdot B\left( k - \frac{5}{2} - 4\epsilon, 1 + 2\epsilon\right) + 1
\ll \frac{m}{k} + 1.
\end{align*}
where $B(\cdot,\cdot)$ is the beta function.
We used the bound $B(k,C) \ll_C k^{-C}$ which follows from Stirling's approximation.

Plugging this approximation back into the integral involving $M_2$ we get
$$
\int_0^\infty M_2(y)y^{k-2} \mathrm{d}y
\ll_\epsilon
\frac{\Gamma(k-1)}{(4\pi m)^{k-1}} 
\left(
\frac{m^{1 + 3\epsilon}}{k^{\frac{3}{2}}}
+
\frac{m^{\frac{1}{2} + 3\epsilon}}{k}
\right).
$$

We now evaluate the integral involving $E_2$.
We have
\begin{align*}
&\int_0^\infty E_2(y)y^{k-2} \mathrm{d}y 
\\ 
&\qquad =
\frac{k^{\frac{1}{6}}}{m^{\frac{1}{2}-2\epsilon}}
\sum_{n > m} (n,m)^{\frac{1}{2}}
\left(\frac{n}{m}\right)^{\frac{k}{2} - \frac{3}{4} + \epsilon}
\left| \frac{\lambda(n-m)}{(n-m)^{1/2}}\right|
\int_{0}^{\infty}
e^{-4\pi n y}y^{k-2} \mathrm{d}y
\\
&\qquad =
\frac{\Gamma(k-1)}{(4\pi m)^{k-1}}
\frac{k^{\frac{1}{6}}}{m^{\frac{1}{2}-2\epsilon}}
\sum_{n > m}
\left(\frac{n}{m}\right)^{\frac{k}{2} - \frac{1}{4} + \epsilon}
\left| \frac{\lambda(n-m)}{(n-m)^{1/2}}\right|
\left(\frac{m}{n}\right)^{k-1}.
\end{align*}
And so, a similar computation to the case of $M_2$ gives:
\begin{align*}
&
\left(\frac{\Gamma(k-1)}{(4\pi m)^{k-1}}
\frac{k^{\frac{1}{6}}}{m^{\frac{1}{2}-3\epsilon}}\right)^{-1}
\int_0^\infty E_2(y)y^{k-2} \mathrm{d}y =
\\ 
&\qquad =
\left(\sum_{n>m}\frac{\lambda^2(n)}{n^{1+2\epsilon}}\right)^{\frac{1}{2}}
\left(\sum_{g\mid m}g
\sum_{n'> m/g } 
\left(\frac{n'}{m/g}\right)^{-k + \frac{3}{2} + 2\epsilon}
\left(\frac{n'}{m/g} - 1\right)^{2\epsilon}
\right)^{\frac{1}{2}}
\\ & \qquad \ll_{\phi,\epsilon}
\left(
\sum_{g\mid m} m\int_0^\infty w^{2\epsilon}(w+1)^{-k+\frac{3}{2}+2\epsilon} \mathrm{d}w
+ g\left(1+\frac{g}{m}\right)^{-k + \frac{3}{2} + 2\epsilon}
\right)^{\frac{1}{2}}
\\ & \qquad \ll_\epsilon
\left(
\sigma_0(m) m B\left( k - \frac{5}{2} - 4\epsilon, 1 + 2\epsilon\right)
+\sigma_0(m) \frac{m}{k}
\right)^{\frac{1}{2}}
\ll_\epsilon
\frac{m^{\frac{1}{2} + \epsilon}}{k^{\frac{1}{2}}}.
\end{align*}

And so, combining these results we conclude
\begin{equation*}
\int_0^\infty a_2(y)e^{-2\pi m y}y^{k-2} \mathrm{d}y  
 \ll_{\phi,\epsilon}
\frac{\Gamma\left(k - 1\right)}{(4\pi m)^{k -1}}
\left(
\frac{m^{1 +  3\epsilon}}{k^{\frac{3}{2}}}
+
\frac{m^{ 4\epsilon}}{ k^{\frac{1}{3}}}
\right).
\end{equation*}
This is the same estimate we got in \eqref{eq-int_a_1} for $\int_0^\infty a_1(y)e^{-2\pi m y}y^{k-\frac{3}{2}} dy$.
And so, using \eqref{eq-brak_ineq_a12} we get
\begin{equation}\label{eq-inner_pkm_maass}
\BRAK{P_{k,m}\phi, P_{k,m}}
\\   \ll_{\phi,\epsilon} 
\frac{\Gamma\left(k - 1\right)}{(4\pi m)^{k -1}}
\left(
\frac{m^{1 +  3\epsilon}}{k^{\frac{3}{2}}}
+
\frac{m^{ 4\epsilon}}{ k^{\frac{1}{3}}}
\right).
\end{equation}
Using the estimation \eqref{eq-pkm_normalization} for $\BRAK{P_{k,m}, P_{k,m}}$, we conclude
$$
\frac{\BRAK{P_{k,m}\phi, P_{k,m}}}
{\BRAK{P_{k,m}, P_{k,m}}}
\ll_{\phi,\epsilon}
\left(
\frac{m^{1 +  \epsilon}}{k^{\frac{3}{2}}}
+
\frac{m^{ \epsilon}}{ k^{\frac{1}{3}}}
\right).
$$
as required.
\end{proof}

\subsection{Inner product with unitary Eisenstein series}
We now consider \eqref{eq-int_m_fourier} in the case where $\phi$ is a unitary Eisenstein series.
Our main result is the following proposition:
\begin{proposition}\label{prop-inner_prod_eisenstein}
Let $\phi(z) = E\left(z, \frac{1}{2} + it\right)$ be a unitary Eisenstein series, 
and let $\epsilon > 0$.
Then for any $m(k)$ satisfying $1 \leq m \ll k^{2 - \epsilon}$ we have
$$
\frac{\BRAK{P_{k,m}\phi, P_{k,m}}}
{\BRAK{P_{k,m}, P_{k,m}}}
\ll_\epsilon
(1 +|t|)^{\epsilon}
\left( 
\frac{k^{\frac{1}{2}}}{m^{\frac{1}{2}}} +
\frac{m^{1 + \epsilon}}{k^{\frac{3}{2}}} + 
\frac{m^{\epsilon}}{k^{\frac{1}{3}}}
\right).
$$
\end{proposition}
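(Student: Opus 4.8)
The plan is to run exactly the same unfolding argument used for \autoref{prop-inner_prod_maass}, starting from the identity \eqref{eq-int_m_fourier}, the only structural novelty being that the Eisenstein series carries a nonzero constant (zeroth Fourier) term. Writing the Fourier expansion of $\phi(z)=E(z,\tfrac12+it)$ as
\[
\phi(z)=y^{1/2+it}+\varphi(\tfrac12+it)\,y^{1/2-it}+\sqrt{y}\sum_{n\neq 0}\eta_t(n)\,K_{it}(2\pi|n|y)\,e(nx),
\]
where $\varphi$ is the scattering coefficient (of modulus one on the critical line) and the $\eta_t(n)$ are the divisor-type coefficients, I would decompose $\widehat{\phi P_{k,m}}[m](y)$ by the Fourier convolution as in the cuspidal case. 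Since $\widehat{P_{k,m}}[r](y)=\pkm{r}e^{-2\pi r y}$ is supported on $r\geq 1$, the constraint $r+s=m$ now produces three pieces rather than two: a term $a_0$ coming from the summand $s=0$, $r=m$ (the new contribution from the constant term), together with terms $a_1$ and $a_2$ coming from $1\le s\le m-1$ and from $s\le -1$ respectively, which are the exact analogues of the $a_1,a_2$ appearing in the proof of \autoref{prop-inner_prod_maass}.

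The pieces $a_1$ and $a_2$ are handled verbatim as in the cuspidal case. One replaces the Hecke eigenvalue bounds and the Rankin-Selberg estimates \eqref{eq-rs_second_moment}, \eqref{eq-rs_second_moment2} by their Eisenstein analogues $\sum_{n\le x}|\eta_t(n)|^2\ll_\epsilon (x(1+|t|))^{\epsilon}x$, and uses the same $K$-Bessel bound \eqref{eq-K_bessel_bound} together with \autoref{prop-pkmn_bound}. This reproduces, up to a factor $(1+|t|)^{\epsilon}$ which records the $t$-dependence of the coefficients, the contribution
\[
\int_0^\infty (a_1(y)+a_2(y))\,e^{-2\pi m y}y^{k-2}\,dy
\ll_\epsilon (1+|t|)^{\epsilon}\,\frac{\Gamma(k-1)}{(4\pi m)^{k-1}}\left(\frac{m^{1+\epsilon}}{k^{3/2}}+\frac{m^{\epsilon}}{k^{1/3}}\right),
\]
yielding, after division by \eqref{eq-pkm_normalization}, the last two terms of the claimed bound.

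The genuinely new input is the constant-term piece $a_0(y)=\pkm{m}\,e^{-2\pi m y}\big(y^{1/2+it}+\varphi(\tfrac12+it)\,y^{1/2-it}\big)$, which I would integrate directly:
\[
\int_0^\infty a_0(y)e^{-2\pi m y}y^{k-2}\,dy
=\pkm{m}\left(\frac{\Gamma(k-\tfrac12+it)}{(4\pi m)^{k-1/2+it}}+\varphi(\tfrac12+it)\frac{\Gamma(k-\tfrac12-it)}{(4\pi m)^{k-1/2-it}}\right).
\]
Dividing by $\BRAK{P_{k,m},P_{k,m}}\sim\Gamma(k-1)/(4\pi m)^{k-1}$, and using $\pkm{m}\sim 1$ from \autoref{prop-pkmm}, $|\varphi(\tfrac12+it)|=1$, and $|(4\pi m)^{\pm it}|=1$, the contribution is bounded by $(4\pi m)^{-1/2}$ times the Gamma ratio $\Gamma(k-\tfrac12\pm it)/\Gamma(k-1)$. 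Since $|\Gamma(k-\tfrac12+it)|\le\Gamma(k-\tfrac12)$ and $\Gamma(k-\tfrac12)/\Gamma(k-1)\sim k^{1/2}$ by Stirling, this produces precisely the first term $k^{1/2}/m^{1/2}$, with no additional $t$-growth.

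I expect the main obstacle to be conceptual rather than computational: the constant-term piece $a_0$ does not decay by itself and is in fact the dominant contribution, producing the term $k^{1/2}/m^{1/2}$ which tends to zero only when $m\ggg k$. This is exactly why the proposition, and ultimately \autoref{thm-mass_equi}, must impose the lower bound $k\lll m$. The remaining labor is bookkeeping: establishing the Eisenstein Rankin-Selberg second moment for the $\eta_t(n)$ with controlled $t$-dependence so that every error is absorbed into the single factor $(1+|t|)^{\epsilon}$, and confirming that $\pkm{m}\sim 1$ throughout the range $1\le m\ll k^{2-\epsilon}$ via \autoref{prop-pkmm}.
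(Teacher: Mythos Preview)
Your proposal is correct and follows essentially the same approach as the paper: split off the constant Fourier term of the Eisenstein series, recycle the Maass computation of \autoref{prop-inner_prod_maass} verbatim for the non-constant part, and evaluate the constant-term contribution directly to produce the $k^{1/2}/m^{1/2}$ term. The only cosmetic differences are that the paper uses the pointwise divisor bound $|\lambda(n)|\le\sigma_0(n)\ll_\epsilon n^\epsilon$ (already uniform in $t$, so no separate Eisenstein Rankin--Selberg input is needed, the $(1+|t|)^\epsilon$ coming instead from the factor $1/\xi(1+2it)$), and it bounds the constant term more crudely via $\widehat{\phi}[0](y)\ll(1+|t|)^\epsilon y^{1/2}$ rather than computing the Gamma integral with the $\pm it$ shift as you do.
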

\begin{proof}
Let $\epsilon >0$, let $m(k)$ be a function satisfying $1 \leq m \ll k^{2 - \epsilon}$, and let $\phi(z)= E\left(z, \frac{1}{2} + it\right)$ be a unitary Eisenstein series.
The Fourier coefficients of $\phi(z)$ are known, so that we have
$$
\phi(z) =
y^{\frac{1}{2} + it} +\frac{\xi(it)}{\xi(1 + it)} y^{\frac{1}{2} - it} 
+ 
\frac{ y^{\frac{1}{2}}}{\xi(1 + 2it)}   \sum_{n\neq 0}\lambda(n)K_{it}(2\pi |n| y) e(nx).
$$
where
\begin{equation*}
\lambda(n) = |n|^{it}\sigma_{-2it}(|n|),\quad
\xi(s) = \pi^{-s/2}\Gamma(s/2)\zeta(s).
\end{equation*}

We write
$$
\phi(z) = \widehat{\phi}[0](y) + \varphi(z)
$$
where
\begin{align*}
& \widehat{\phi}[0](y) = y^{\frac{1}{2} + it} +\frac{\xi(it)}{\xi(1 + it)} y^{\frac{1}{2} - it}  \\
& \varphi(z) = \sum_{n\neq 0 } \widehat{\phi}[n](y)e(nx) 
= \frac{ y^{\frac{1}{2}}}{\xi(1 + 2it)}   \sum_{n\neq 0}\lambda(n)K_{it}(2\pi |n| y) e(nx).  
\end{align*}
Once more, from \eqref{eq-int_m_fourier}, we are interested in estimating 
$$
\int_0^\infty
\widehat{\phi \cdot P_{k,m}}[m](y) e^{-2\pi m y}y^{k-2} \mathrm{d}y.  
$$
We can separate this Fourier coefficient into two parts:
$$
\widehat{\phi \cdot P_{k,m}}[m](y)
=
\left(
\widehat{\phi}[0](y)\cdot \widehat{P_{k,m}}[m](y)
\right)
+
\left(
\widehat{\varphi \cdot P_{k,m}}[m](y)
\right).
$$
From the definition of $\lambda(n)$ we have 
$$
\lambda(n) \leq \sigma_0(|n|) \ll_\epsilon |n|^\epsilon
$$
for any $\epsilon>0$, so that the bounds \eqref{eq-rs_second_moment}, \eqref{eq-rs_second_moment2} are satisfied in this case as well.
And so, giving a bound for
\begin{equation}\label{eq-int_pkm_varphi}
\int_0^\infty\widehat{\varphi \cdot P_{k,m}}[m](y) e^{-2\pi m y}y^{k-2} \mathrm{d}y    
\end{equation}
can be accomplished using the exact same computations from \autoref{prop-inner_prod_maass} (where instead of $\varphi$ we had an even Hecke Maass cusp form).
Furthermore, since the bound $\lambda(n) \ll_\epsilon |n|^\epsilon$ is uniform in $t$, the calculations from the previous section will also be uniform in $t$. 
And so, from the computations leading to \eqref{eq-inner_pkm_maass} we get:
\begin{multline*}
\frac{1}{\BRAK{P_{k,m},P_{k,m}}}
\int_0^\infty
\widehat{\varphi \cdot P_{k,m}}[m](y) e^{-2\pi m y}y^{k-2} \mathrm{d}y 
\\ \ll_{\epsilon} 
\frac{1}{\xi(1 + it)}
\left(
\frac{m^{1 + 3\epsilon}}{k^{\frac{3}{2}}}
+
\frac{m^{  4\epsilon}}{ k^{\frac{1}{3} }}
\right).
\end{multline*}
Using the bound $|\zeta(1 + it)|^{-1} \ll \log(|t| + 1)$ and Stirling's approximation then gives
\begin{multline}\label{eq-first_part_eisenstein_approx}
\frac{1}{\BRAK{P_{k,m},P_{k,m}}}
\int_0^\infty
\widehat{\varphi \cdot P_{k,m}}[m](y) e^{-2\pi m y}y^{k-2} \mathrm{d}y 
\\ \ll_{\epsilon} 
\log(|t| + 1)
\left(
\frac{m^{1 + 3\epsilon}}{k^{\frac{3}{2}}}
+
\frac{m^{  4\epsilon}}{ k^{\frac{1}{3} }}
\right).
\end{multline}

We now consider 
\begin{equation*}
\int_0^\infty
\left(
\widehat{\phi}[0](y)\cdot \widehat{P_{k,m}}[m](y)
\right) e^{-2\pi m y}y^{k-2} \mathrm{d}y .   
\end{equation*}
We have
$$
\widehat{\phi}[0](y) = y^{\frac{1}{2} + it} +\frac{\xi(it)}{\xi(1 + it)} y^{\frac{1}{2} - it}.
$$
Using the functional equation $\xi(s) = \xi(1-s)$ together with Stirling's approximation and the bounds 
$$
\log(|t| + 1)^{-1} \ll |\zeta(1 + it)| \ll \log(|t| + 1)
$$
we get $\frac{\xi(it)}{\xi(1 + it)} \ll \log^2(|t| + 1)$, so that 
$$
\widehat{\phi}[0](y) \ll (1 +  |t|)^\epsilon y^{\frac{1}{2}}.
$$

It follows that
\begin{multline*}
\int_0^\infty
\left(
\widehat{\phi}[0](y)\cdot \widehat{P_{k,m}}[m](y)
\right) e^{-2\pi m y}y^{k-2} \mathrm{d}y 
\\
\ll_\epsilon
 (1 +  |t|)^\epsilon
\int_0^\infty\left( y^{\frac{1}{2}} e^{-2\pi m y}\right) e^{-2\pi m y}y^{k-2} \mathrm{d}y 
=
(1 +  |t|)^\epsilon
\frac{\Gamma\left(k - \frac{1}{2}\right)}{(4\pi m)^{ k - \frac{1}{2}}}.
\end{multline*}
Using the estimation \eqref{eq-pkm_normalization} for $\BRAK{P_{k,m}, P_{k,m}}$, and the approximation
$$
\bigslant{\frac{\Gamma\left(k - \frac{1}{2}\right)}{(4\pi m)^{k - \frac{1}{2}}}}{\frac{\Gamma\left(k - 1\right)}{(4\pi m)^{k - 1}}} 
\asymp
\frac{k^{1/2}}{m^{1/2}}
$$
we get
\begin{equation*}
\frac{1}{\BRAK{P_{k,m},P_{k,m}}}
\int_0^\infty
\left(
\widehat{\phi}[0](y)\cdot \widehat{P_{k,m}}[m](y)
\right) e^{-2\pi m y}y^{k-2} \mathrm{d}y 
\ll_\epsilon
 (1 +  |t|)^\epsilon  \frac{k^{\frac{1}{2}}}{m^{\frac{1}{2}}}.
\end{equation*}
Combining this with \eqref{eq-first_part_eisenstein_approx} finishes the proof.
\end{proof}

\subsection{Proof of mass equidistribution}
We now proceed with the proof of \autoref{thm-mass_equi}.

We will use the notation
$$
\widetilde{P}_{k,m} = \frac{P_{k,m}}{\BRAK{P_{k,m}, P_{k,m}}^{\frac{1}{2}}}.
$$
Let $\psi\in C_c^\infty\left(\SL\backslash\HH\right)$ be a smooth compactly supported function.
We wish to show that 
$$
\BRAK{\psi, |\widetilde{P}_{k,m}|^2}
\xrightarrow{k\rightarrow\infty}
\frac{3}{\pi}\BRAK{\psi, 1}.
$$

For $\psi$ as above we have (see for example \cite[Eq. 2.11]{arXiv:2405.00996})
\begin{equation} \label{eq-fast_deacy_psi}
\BRAK{\psi, \phi_j} \ll_{A,\psi}(1 + |t_j|)^{-A},
\qquad
\BRAK{\psi, E\left(\cdot , \frac{1}{2} + it\right)} \ll_{A,\psi} (1 + |t|)^{-A}
\end{equation}
for all $A\geq 1$. 
Here $\phi_j$ is a Maass cusp form with associated eigenvalue $\frac{1}{4} + t_j^2$.

From the spectral decomposition of $L^2\left(\SL \backslash \HH\right)$ (see e.g. \cite[Theorem 15.5]{MR2061214}) and Parseval's identity, we have
\begin{multline}\label{eq-parseval}
\BRAK{\psi,\left|\widetilde{P}_{k,m}^2\right|}
=
\frac{3}{\pi}\BRAK{\psi,1}
+ \sum_{j\geq 1}\BRAK{\left|\widetilde{P}_{k,m}^2\right|, \phi_j}\BRAK{\psi, \phi_j} \\
+ \int_{-\infty}^{\infty}
\BRAK{\left|\widetilde{P}_{k,m}^2\right|,E\left(\cdot\;, \frac{1}{2} + it\right)}
\BRAK{\psi,E\left(\cdot\;, \frac{1}{2} + it\right)} 
\mathrm{d} t.
\end{multline}
Here $\phi_j$ goes over a normalized basis for the cuspidial subspace 
consisting of Hecke Maass cusp forms, with associated eigenvalue $\frac{1}{4} + t_j^2$, such that $0<t_1 \leq t_2 \leq ...$.

And so, in order to prove
$$
\BRAK{\psi, |\widetilde{P}_{k,m}|^2}
\xrightarrow{k\rightarrow\infty}
\frac{3}{\pi}\BRAK{\psi, 1},
$$
it is enough to show that
$$
S = \sum_{j\geq 1}\BRAK{\left|\widetilde{P}_{k,m}^2\right|, \phi_j}\BRAK{\psi, \phi_j}
$$
and 
$$
I=\int_{-\infty}^{\infty}
\BRAK{\left|\widetilde{P}_{k,m}^2\right|,E\left(\cdot\;, \frac{1}{2} + it\right)}
\BRAK{\psi,E\left(\cdot\;, \frac{1}{2} + it\right)} 
\mathrm{d} t
$$
are arbitrarily small for large enough $k$.

We begin with $S$.
We pick a parameter $T>1$ and we split $S$ as 
$$
S = S_{\leq T} + S_{>T} =
\sum_{t_j\leq T}\BRAK{\left|\widetilde{P}_{k,m}^2\right|, \phi_j}\BRAK{\psi, \phi_j}
+
\sum_{t_j> T}\BRAK{\left|\widetilde{P}_{k,m}^2\right|, \phi_j}\BRAK{\psi, \phi_j}.
$$
In order to bound $S_{>T}$ we use
$$
\BRAK{\left|\widetilde{P}_{k,m}^2\right|, \phi_j}
\leq
\|\phi_j\|_{\infty}\BRAK{\widetilde{P}_{k,m},\widetilde{P}_{k,m}}=\|\phi_j\|_{\infty}.
$$
We use the bound $\|\phi_j\|_{\infty} \ll t_j^{\frac{1}{2}}$ (see \cite[Eq. 8.3']{MR1942691}).
We note that the stronger bound $\|\phi_j\|_{\infty} \ll_\epsilon t_j^{\frac{5}{12} + \epsilon}$ \cite[Appendix 1]{MR1324136} is also available, but is not needed.
Together with \eqref{eq-fast_deacy_psi} we get
$$
S_{> T} \ll_{A,\psi} \sum_{t_j > T} t_j^{-A + \frac{1}{2}}.
$$
Using Selberg's Weyl type law
$$
\#\SET{\phi_j \; : \; t_j \leq x} \ll x^2
$$
and partial summation, this gives
$$
S_{> T} \ll_{A,\psi} T^{-A + \frac{5}{2}}.
$$
And so, this sum can be made arbitrarily small by choosing $T$ large enough.

After fixing such a $T$, we now show that $S_{\leq T}$ and $I$ become arbitrarily small for sufficiently large $k$.
From \autoref{prop-inner_prod_maass} we have that for all $j$ we have 
$$
\BRAK{\left|\widetilde{P}_{k,m}^2\right|, \phi_j} 
\xrightarrow{k\rightarrow\infty} 0.
$$
And so, it follows that the finite sum $S_{\leq T}$ also satisfies $S_{\leq T}\xrightarrow{k\rightarrow\infty} 0$.

As for $I$, from \autoref{prop-inner_prod_eisenstein} we have that 
$$
\BRAK{\left|\widetilde{P}_{k,m}^2\right|, E\left(\cdot\;, \frac{1}{2} + it\right)} \ll_\epsilon (1 + |t|)^{\epsilon} o(1).
$$
Using \eqref{eq-fast_deacy_psi} we have
$$
I \ll_{\epsilon,A,\psi} \int_{-\infty}^{\infty}(1 + |t|)^{-A + \epsilon} \mathrm{d} t \cdot o(1)
\ll o(1).
$$
And so, once more we have $I\rightarrow 0$ as $k\rightarrow\infty$.

It follows from \eqref{eq-parseval} that
$$
\left|\BRAK{\psi,\left|\widetilde{P}_{k,m}^2\right|}
-
\frac{3}{\pi}\BRAK{\psi,1}\right|
$$
becomes arbitrarily small as $k\rightarrow \infty$, which finishes the proof.

\subsection{Uniform distribution of zeros}\label{subsec-zeros}

A non-zero modular form $f\in \MM_k$ has roughly $\frac{k}{12}$ zeros in $\mathcal{F}$.
More precisely, such a form satisfies the valence formula:
\begin{equation*}
v_\infty(f) + \sum_{p \in \mathcal{F}}w(p) v_p(f)
= \frac{k}{12}
\end{equation*}
where $v_p(f)$ is the order of vanishing of $f$ at $p$, and
$$
w(p) = \begin{cases}
   \frac{1}{2}  & p = i \\ 
   \frac{1}{3}  & p = \frac{1}{2} + \frac{\sqrt{3}}{2}i \\
   1            & \text{otherwise}
\end{cases}.
$$

In \cite[Theorem 2]{MR2181743} Rudnick showed that mass equidistribution implies equidistribution of zeros:
\begin{theorem}[Rudnick]\label{thm-equidist_zeros}
Let $\{f_k\}$ be a sequence of $L^2$ normalized cusp forms $f_k\in \SS_k$, for which the bulk of the zeros lie in the fundamental domain: $v_\infty(f_k) = o(k)$.
If $\mathrm{d}\mu_f  \xrightarrow{wk-*} \frac{3}{\pi}\mathrm{d}\mu $, then the zeros of $f_k$ become equidistributed in $\mathcal{F}$ with respect to the normalized hyperbolic measure $\frac{3}{\pi}\mathrm{d}\mu$ in the following sense:
for any nice compact subset $\Omega \subset \mathcal{F}$, we have
$$
\frac{12}{k} \sum_{p \in \Omega}w(p)v_{p}(f_k)
\sim \frac{3}{\pi }\int_\Omega \mathrm{d}\mu.
$$
\end{theorem}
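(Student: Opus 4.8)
The plan is to translate the zero count into the Laplacian of the logarithm of the invariant mass density, and then to feed in mass equidistribution. Write $F_k(z)=\log\!\big(y^k|f_k(z)|^2\big)$, which is $\SL$-invariant and descends to $\mathcal{F}$. Since $f_k$ is holomorphic, $\log|f_k|^2$ is harmonic off the zeros, and near a zero $p$ of order $v_p(f_k)$ one has $\log|f_k(z)|^2=2v_p(f_k)\log|z-p|+O(1)$. Combining $\Delta(k\log y)=-k$ with $\Delta_{\mathrm{eucl}}\log|z-p|=2\pi\delta_p$ and passing to the quotient orbifold (where the elliptic fixed points $i$ and $e^{2\pi i/3}$ acquire the weights $w(p)$ underlying the valence formula) gives the distributional identity
\begin{equation*}
\Delta F_k=-k+4\pi\sum_{p\in\mathcal{F}}w(p)\,v_p(f_k)\,\delta_p.
\end{equation*}
Pairing against a fixed $\psi\in C_c^\infty(\SL\backslash\HH)$ and using self-adjointness of $\Delta$ (no boundary term arises, since $\psi$ is compactly supported and the side identifications cancel) yields the exact identity
\begin{equation*}
\frac{12}{k}\sum_{p\in\mathcal{F}}w(p)\,v_p(f_k)\,\psi(p)
=\frac{3}{\pi}\int_{\mathcal{F}}\psi\,\mathrm{d}\mu
+\frac{3}{\pi k}\int_{\mathcal{F}}F_k\,\Delta\psi\,\mathrm{d}\mu,
\end{equation*}
where I used $\tfrac{12}{k}=\tfrac{3}{\pi}\cdot\tfrac{4\pi}{k}$.

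It therefore suffices to show that the error term $\frac1k\int_{\mathcal{F}}F_k\,\Delta\psi\,\mathrm{d}\mu$ tends to $0$ for each fixed smooth $\psi$. Since $\Delta\psi$ is bounded and supported in a fixed compact set $K=\operatorname{supp}\psi$, it is enough to prove $\frac1k\int_K|F_k|\,\mathrm{d}\mu\to0$. The positive part is soft: as $f_k$ is $L^2$-normalized, $\int_{\mathcal{F}}e^{F_k}\,\mathrm{d}\mu=\int_{\mathcal{F}}y^k|f_k|^2\,\mathrm{d}\mu=1$, so from $\log t\le t-1$ we get $\int_{\mathcal{F}}F_k^+\,\mathrm{d}\mu\le 1$, whence $\frac1k\int_K F_k^+\,\mathrm{d}\mu\to0$; Jensen's inequality likewise gives $\frac{3}{\pi}\int_{\mathcal{F}}F_k\,\mathrm{d}\mu\le\log\frac{3}{\pi}$, so $\limsup_k \frac1k\int_{\mathcal{F}}F_k\,\mathrm{d}\mu\le0$.

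The heart of the matter, and the step I expect to be the main obstacle, is the matching lower bound $\int_K F_k^-\,\mathrm{d}\mu=o(k)$, i.e.\ control of how small $y^k|f_k|^2$ can be on $K$. Here Jensen is useless and mass equidistribution must genuinely be used. The difficulty is quantitative: $F_k^-$ is concentrated near the $\asymp k$ zeros lying in $K$, where $F_k$ has integrable logarithmic singularities each contributing $O(1)$, so a naive local count yields only $\int_K F_k^-=O(k)$. To upgrade this to $o(k)$ one invokes the weak-$*$ convergence $y^k|f_k|^2\,\mathrm{d}\mu\to\frac{3}{\pi}\,\mathrm{d}\mu$ on $K$: if $\frac1k F_k$ were to converge, along a subsequence in $L^1(K)$, to a limit that is negative on a set of positive measure, then the prescribed limiting mass $\frac3\pi\,\mathrm{d}\mu$ could only be produced from increasingly thin sets on which $y^k|f_k|^2$ blows up, a concentration that the subharmonicity of $\log|f_k|$ together with the bounded number of zeros forbids. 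Making this concentration-compactness dichotomy rigorous is the crux of the argument, and of Rudnick's proof.

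Granting $\frac1k\int_{\mathcal{F}}F_k\,\Delta\psi\,\mathrm{d}\mu\to0$ for every smooth $\psi$, the master identity gives $\frac{12}{k}\sum_{p}w(p)v_p(f_k)\psi(p)\to\frac3\pi\int_{\mathcal{F}}\psi\,\mathrm{d}\mu$. To reach the indicator $\mathbf 1_\Omega$ of a nice compact $\Omega$ (with $\mu(\partial\Omega)=0$), I would sandwich $0\le\psi^-\le\mathbf 1_\Omega\le\psi^+$ by smooth compactly supported functions with $\int_{\mathcal{F}}(\psi^+-\psi^-)\,\mathrm{d}\mu<\epsilon$. Monotonicity gives
\begin{equation*}
\frac{12}{k}\sum_{p}w(p)v_p(f_k)\psi^-(p)
\le \frac{12}{k}\sum_{p\in\Omega}w(p)v_p(f_k)
\le \frac{12}{k}\sum_{p}w(p)v_p(f_k)\psi^+(p),
\end{equation*}
and letting $k\to\infty$ squeezes the middle term between $\frac3\pi\int_{\mathcal{F}}\psi^-\,\mathrm{d}\mu$ and $\frac3\pi\int_{\mathcal{F}}\psi^+\,\mathrm{d}\mu$, both within $O(\epsilon)$ of $\frac3\pi\int_\Omega\mathrm{d}\mu$ since $\mu(\partial\Omega)=0$. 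The hypothesis $v_\infty(f_k)=o(k)$ enters here, via the valence formula $\frac{12}{k}\sum_p w(p)v_p(f_k)=1-\frac{12\,v_\infty(f_k)}{k}\to1$, to guarantee that no positive proportion of the zeros escapes into the cusp and is thereby miscounted. Letting $\epsilon\to0$ yields $\frac{12}{k}\sum_{p\in\Omega}w(p)v_p(f_k)\to\frac3\pi\int_\Omega\mathrm{d}\mu$, which is the assertion.
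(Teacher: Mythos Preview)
The paper does not prove this theorem; it is quoted from Rudnick \cite{MR2181743} and used as a black box to deduce \autoref{cor-zeros}. So there is no in-paper proof to compare your proposal against.

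That said, your outline faithfully reconstructs the architecture of Rudnick's argument: the distributional identity $\Delta F_k=-k+4\pi\sum w(p)v_p(f_k)\delta_p$, the pairing against $\Delta\psi$, the easy bound $\int F_k^+\le 1$ from $\log t\le t-1$, and the passage from smooth $\psi$ to $\mathbf 1_\Omega$ by sandwiching. You also correctly locate the one genuinely nontrivial step, namely $\int_K F_k^-\,\mathrm d\mu=o(k)$, and you are candid that your concentration-compactness heuristic does not constitute a proof.

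This is a real gap. In Rudnick's paper the step is handled via potential theory: the functions $\tfrac1k\log|f_k|^2$ are subharmonic, uniformly bounded above on compacta (a polynomial sup-norm bound $y^{k/2}|f_k|\ll k^{O(1)}$ for $L^2$-normalized cusp forms gives $\tfrac1kF_k\le O(\tfrac{\log k}{k})$), and have Riesz measures of total mass $O(1)$ on compacta by the valence formula. Standard compactness for such families then forces either $\tfrac1kF_k\to-\infty$ locally uniformly, which QUE rules out since the limiting mass on $K$ is positive, or precompactness in $L^1_{\mathrm{loc}}$ with every subsequential limit $u\le 0$; one then argues that $u<0$ on a set of positive measure would again contradict QUE, so $u\equiv 0$ and hence $\tfrac1k\int_K|F_k|\to0$. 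Your write-up gestures at exactly this dichotomy but does not supply the subharmonic-compactness input or the sup-norm bound that makes it run; as written, the proof is incomplete precisely at the point you flagged.
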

Here $\xrightarrow{wk-*}$ denotes weak convergence when we test against compactly supported functions.

\begin{remark}
A careful examination of the proof of \cite[Theorem 2]{MR2181743} shows that the condition $v_\infty(f_k) = o(k)$ is not necessary.
In fact, the proof in \cite{MR2181743} actually shows that any sequence of cusp forms $\{f_k\}$ satisfying holomorphic QUE must satisfy $v_\infty(f_k) = o(k)$ as well.
We outline how this follows from the original proof of \cite[Theorem 2]{MR2181743}.

In equation 3.2, one does not need to divide by $\nu(f_k)$.
Proceeding with the proof without this division then shows that for any nice subset $\Omega \subset \mathcal{F}$, the number of zeros of $f_k$ in $\Omega$ is $\frac{k}{12}\cdot \frac{3}{\pi}\cdot \mathrm{Vol}\left(\Omega\right) + o(k)$.
And so, if one takes a very large $\Omega$, say of volume $\frac{\pi}{3} (1- \epsilon)$, then the result above shows that the number of zeros of $f_k$ in $\Omega$ is $\frac{k}{12}\left(1 -\epsilon\right) + o(k)$.
This accounts for most of the zeros of $f_k$, and so the number of zeros at the cusp is at most $ \frac{k}{12}\epsilon + o(k)$.
This is true for any $\epsilon>0$, which proves that $v_\infty(f_k) = o(k)$.
\end{remark}

\begin{remark}
In \cite{kimmel2024vanishing} we show that in fact  $P_{k,m}$ vanishes to order one at infinity   ($v_\infty(P_{k,m})=1$), if $m\leq \left( \frac{k-1}{4\pi}\right)^2$.
\end{remark}

We can use this in order to prove \autoref{cor-zeros}:
\begin{proof}
Let $\{P_{k,m}\}$ be a sequence satisfying the conditions of \autoref{cor-zeros}.
From \autoref{thm-mass_equi} we have that $\{P_{k,m}\}$ has mass equidistribution.
And so we can apply \autoref{thm-equidist_zeros} to $\{P_{k,m}\}$ and the result follows.
\end{proof}

\section*{Acknowledgments}
I would like to thank my Ph.D. advisor Zeev Rudnick for his guidance, insightful discussions, and for introducing me to this subject.
I would also like to thank Bingrong Huang for his comments on an earlier draft.
Lastly I would like to thank the anonymous referee for their suggestions and corrections.

\bibliographystyle{plain}
\bibliography{my_bib}

\end{document}